\newcommand{\Px}{ \mathbb{P} }
\newcommand{\N}{ \mathbb{N} }
\newcommand{\Ex}{ \mathbb{E} }
\def\esssup_#1{\underset{#1}{\Xi}}
\def\essinf_#1{\underset{#1}{\mathrm{ess\,inf\, }}}
\def\argmax_#1{\underset{#1}{\mathrm{arg\,max\, }}}
\def\argmin_#1{\underset{#1}{\mathrm{arg\,min\, }}}
\newcommand{\Fx}{\mathbb{F} }
\newcommand{\F}{\mathcal{F}}
\newcommand{\R}{\mathds{R}}
\newcommand{\Asa}{{\bf(A$_{s1}$)}}
\newcommand{\Asb}{{\bf(A$_{s2}$)}}
\newcommand{\Asc}{{\bf(A$_{s3}$)}}
\newtheorem{theorem}{Theorem}[section]
\newtheorem{definition}{Definition}[section]
\newtheorem{proposition}[theorem]{Proposition}
\newtheorem{remark}[theorem]{Remark}
\newtheorem{lemma}[theorem]{Lemma}
\definecolor{Red}{rgb}{1.00, 0.00, 0.00}
\definecolor{DRed}{rgb}{0.5, 0.00, 0.00}
\definecolor{Blue}{rgb}{0.00, 0.00, 1.00}
\definecolor{Green}{rgb}{0.0, 0.4, 0.0}
\begin{document}

\title{Approximating Nash Equilibrium for Production Control with Sticky Price}
\author{Chunmei Jiang \thanks{Email: meizi81@aliyun.com, Xi'an Jiaotong University City College, No. 8715, Shangji Road, Xi'an, China.}
\and
Tongqing Li \thanks{Email: ltqing@mail.ustc.edu.cn, School of Mathematical Sciences, University of Science and Technology of China, Hefei, China.}
\and
Jie Yu \thanks{Email: yj123456@mail.ustc.edu.cn, School of Mathematical Sciences, University of Science and Technology of China, Hefei, China.}
}

\maketitle

\begin{abstract}
  We study a mean field game problem arising from the production control for multiple firms with price stickiness in the commodity market. The price dynamics for each firm is described as a (controlled) jump-diffusion process with mean-field interaction. Each firm aims to maximize her expectation of cumulative net profit coupled with each other through price processes. By solving the limiting control problem and a fixed-point problem, we construct an explicit approximating Nash equilibrium when the number of firms grows large.
  \vspace{0.2 cm}

  \noindent{\textbf{AMS 2000 subject classifications}: 91A25, 91B70, 93B52, 93E20}
  \vspace{0.2 cm}

  \noindent{\textbf{Keywords}:}\quad Production output adjustment; sticky price; mean field game;  approximating Nash equilibrium.
\end{abstract}

\section{Introduction}

In this paper, we consider an optimal production control problem with price stickiness for a large number of firms in the commodity market. The objective of each firm is to seek an optimal adjustment strategy of the production rate in order to maximize its expectation of the overall net profit. This type of the production planning problem has been arisen in many fields such as the electricity generation planning (\cite{AminloeiGhaderi10}) and optimal investment (\cite{GuoPham05}). Our problem is related to that considered in \cite{GuoPham05} which study a partially reversible investment problem in which each firm can adjust its production capacity according to market fluctuations. In this article, the output rate of each firm is formulated as a controlled geometry Brownian motion, i.e., for $i=1,\ldots,n$, the production output rate process $X^{i,u^i}=(X_t^{i,u^i})_{t\geq0}$ for firm $i$ evolves as follows: $X_0^{i,u^i}=X_0^i$,
\begin{align}\label{eq:Xi-state}
dX_t^{i,u^i}=X_t^{i,u^i}(-\mu_idt+\sigma_i dW_t^i)+u_t^idt,
\end{align}
where $-\mu_idt+\sigma_i dW_t^i$ denotes the random capital depreciation rate with $\mu_i,\sigma_i>0$, and $W^i=(W_t^i)_{t\geq0}$ for $i\in\N$ are independent Brownian motions. The firm $i$ can adjust its output rate via its output rate control process $u^i=(u^i_t)_{t\geq0}$. We will explain the model in Section \ref{sec:model}. As in \cite{AbelEberly97} and \cite{WangHuang19}, the objective (cost) functional of our production control problem is considered to be quadratic in the output rate control for each firm. This is different from the linear cost of adjustment considered in \cite{GuoPham05} which results in a singular control problem for a single agent. On the other hand, unlike the case in \cite{GuoPham05} that the profit function only depends on the production capacity, we propose the profit rate of firm $i$ at time $t$ which relies both on the current price $P_t$ of the common product and its production rate $X_t^{i,u^i}$.

We also incorporate price stickiness into our market model. The price stickiness is the resistance of the market price to change quickly, despite it is optimal from an economic perspective that the price should change instantaneously when supply and demand change. It has been proposed in many popular macro models.  Fershtman and Kamien \cite{FershtmanKamien87} consider a duoplistic competition model with sticky price and derive an explicit open-loop and closed-loop Nash equilibrium.  Cellini and Lambertini \cite{CelliniLambertini04} extend the results mentioned above to the case which has more than two players. They show that the firms prefer the open-loop equilibrium as the price of goods is higher than the equilibrium price corresponding to the close-loop Nash equilibrium strategies. However, in both of the above references, each firm only solves a one-dimensional optimal control problem when the control strategies of other firms are fixed because the corresponding (controlled) state process is only one-dimensional.  Wang and Huang \cite{WangHuang19} explore an output control problem with a large number of producers supplying a certain product and sticky price whose state process is two-dimensional and is more similar to our case. They give both Nash and social optimum strategies and further compare the two solutions numerically. We also stress that random losses of goods during transportation and storage may lead to the surging demand of the market and hence the discontinuity of paths of price process. Thus, differently from the case considered in the papers reviewed above, we employ Poisson processes to describe the occurrences of random losses in the modelling of the price process.

Since the objective functional of each form is coupled through the discontinuous price process, it is in general hard to derive the Nash equilibrium, a fortiori the explicit formula, especially when the number of firms grows large. To bypass this difficulty, we establish the mean field game (MFG), which is independently introduced in \cite{LasryLions07} and \cite{Huang-M-C06}. It provides a powerful methodology for deducing the computation complexity when the number of players is large by constructing an approximately optimal action via the limiting problem. Thus, it has been rapidly developed since its inception and has been used in many fields, such as Bo et al. \cite{Bo-W-Y21} on the optimal investment with contagious risk (\cite{BoCapponi16}), Carmona et al. \cite{Carmona-F-S15} for systemic risk control and  Lacker and Soret \cite{LackerSoret20} for the optimal investment with consumption. In order to establish MFG, we solve the optimal control problem for a so-called representative firm (i.e. the limiting control problem), and then establish the unique fixed point of the mapping related to the consistence condition.


The rest of this paper is organized as follows. Section \ref{sec:model} formulates the output adjustment problem with price stickiness in the case with finite firms. Section \ref{sec:MFG} studies the optimal control problem for the representative firm, and then the resulting fixed point problem is solved. In Section \ref{sec:approx-Nash}, we give an explicit strategy which can be shown to be an approximate Nash equilibrium of the MFG.

\section{Problem Formulation}\label{sec:model}

We consider a commodity market consisting of $n$ firms which produce the same goods. Each firm can adjust its own production capacity.

Let $(\Omega,\F,\Px)$ be a complete probability space with a reference filtration $\Fx=(\F_t)_{t\geq0}$ satisfying the usual conditions. This space supports $n$ independent Brownian motions $W^i=(W_t^i)_{t\geq0}$ for $i=1,\ldots,n$, and $n$ independent Poisson process $N^i=(N_t^i)_{t\geq0}$ with intensity parameter $\lambda_i>0$ for $i=1,\ldots,n$.
The production output rate process $X^{i,u^i}=(X_t^{i,u^i})_{t\geq0}$ of firm $i$ evolves as \eqref{eq:Xi-state}, i.e.,
\begin{equation*}
dX_t^{i,u^i}=X_t^{i,u^i}(-\mu_idt+\sigma_i dW_t^i)+u_t^idt,\quad X_0^{i,u^i}=X_0^i\in\R,
\end{equation*}
where, for $i=1,\ldots,n$,
\begin{itemize}
  \item $u_t^i$ is the output rate adjusted by firm $i$ at time $t$, which suffers a quadratic cost $r_i|u_t^i|^2$ with the cost parameter $r_i>0$.
  \item $\mu_i>0$ is the depreciation rate of the production of firm $i$.
  \item $\sigma_i>0$ is the volatility of the production output for firm $i$.
\end{itemize}

We introduce the price dynamics of the same goods depending on the state processes of all firms as follows: $P_0^u=p_0\in\R$,
\begin{equation}\label{eq:Price}
  dP_t^u=\alpha\bigg(\underbrace{\beta-\frac{1}{n}\sum_{i=1}^n X_t^{i,u^i}}_{\textrm{theoretical price}}-P_t^u\bigg)dt+\underbrace{\frac{\alpha }{n}\sum_{i=1}^{n}\gamma_iX_t^{i,u^i}dN_t^{i}}_{\textrm{random losses}},
\end{equation}
where $u=(u^1,\ldots,u^n)\in\mathbb{U}^n$ denotes the vector of strategies and $\mathbb{U}$ stands for the admissible control space which will be specified later. The term $\beta-\frac{1}{n}\sum_{i=1}^n X_t^{i,u^i}$ is referred to as the theoretical price derived from the linear inverse demand function, while $\beta>0$ is the constant demand rate. As in \cite{FershtmanKamien87} and \cite{WangHuang19}, the price adjusts (with the speed of adjustment $\alpha>0$) proportionally to the difference between the theoretical price and the current price. However, losses of goods may arise during transportation and storage, so that the output of each firm may not completely put into the market. We simply assume that random losses of the output of firm $i$, which occur at the sequence of jump times of $N^i=(N_t^i)_{t\geq0}$, is given by $\gamma_i X_t^{i,u^i}$ with the ratio parameter $\gamma_i>0$. Thus, the reward functional for firm $i$ is given by
\begin{equation}\label{eq:reward-func}
  R_i(u):=\Ex\left[\int_{0}^{\infty}e^{-\rho t}\left((1-c_i)P_t^uX_t^{i,u^i}-r_i(u_t^i)^2\right)dt\right],
\end{equation}
where $\rho>0$ is the discount rate and $c_i P_t X_t^{i,u^i}$ is the production cost for firm $i$ with ratio $c_i\in(0,1)$.

\begin{remark}
Our price process described as \eqref{eq:Price} may result in a negative price  with positive probability. In fact, in most commodity markets, the prices of goods such as oil, onions and electricity can fall below zero. The negative price may be caused by (i) the supply is larger than the demand; or (ii) the storage cost is too expensive when the storage is getting closer to the capacity. Similar setup with possible negative prices for goods has been  considered in \cite{CelliniLambertini04}, \cite{FershtmanKamien87} and \cite{WangHuang19}.
\end{remark}

The aim of firm $i$ is to maximize its reward functional $R_i(u)$ over $u^i\in\mathbb{U}$. The admissible control set $\mathbb{U}$ is defined to be the set of $\Fx$-adapted r.c.l.l. process $u=(u_t)_{t\geq0}$ such that
\begin{align}\label{eq:|u|inf}
    \|u\|_\rho:=\left\{\Ex\left[\int_0^\infty e^{-\rho t}|u_t|^2dt\right]\right\}^{\frac{1}{2}}<\infty.
\end{align}
Building upon the objective functional \eqref{eq:reward-func},  we give the definition of Nash equilibrium as follows:
\begin{definition}\label{def:Nash}
A policy $u^*=(u^{*,1},\ldots,u^{*,n})\in\mathbb{U}^n$ is called a Nash equilibrium for this $n$-player game, if for all $i=1,\ldots,n$,
\begin{align*}
  R_i(u^*)=\sup_{u^i\in\mathbb{U}}R_i(u^i,u^{*,-i}).
\end{align*}
Here the $(u^i,u^{*,-i})=(u^{*,1},\ldots,u^{*,i-1},u^i,u^{*,i+1},\ldots,u^{*,n})$. For a given $\varepsilon>0$, $u^*=(u^{*,1},\ldots,u^{*,n})\in\mathbb{U}^n$ is an $\varepsilon$-Nash equilibrium, if
\begin{align}\label{eq:NE00}
  R_i(u^*)\geq\sup_{u^i\in\mathbb{U}}R_i(u^i,u^{*,-i})-\varepsilon,~ \forall i=1,\ldots,n.
\end{align}
\end{definition}
When $n$ is large, the Nash equilibrium is hard to compute. As the impact of each single firm is insignificant, it is often convenient to study the optimal control problem for the representative firm (i.e., the limiting problem as $n\to\infty$) and establish an approximating Nash equilibrium. To do it, we impose the following assumptions throughout the paper:
\begin{description}
  \item[{\Asa}] The initial outputs $X_0^i$, $i=1,\ldots,n$, $(W^1,\ldots, W^n)$ and $(N^1,\ldots, N^n)$ are mutually independent. Moreover,  $\sup_{i\in\N}\Ex[|X_0^i|^2]<\infty$ and {\small$x_0:=\lim_{n\to\infty}\frac{1}{n}\sum_{i=1}^n\Ex[X_0^i]$} exists.
  \item[{\Asb}] The type vector $\theta_i:=(\mu_i,\sigma_i,\gamma_i,\lambda_i,r_i,c_i)\in\mathbb{R}_+^6$ converges to some $\theta=(\mu,\sigma,\gamma,\lambda,r,c)\in\mathbb{R}_+^6$ as $i\to\infty$. Here $\mathbb{R}_+:=(0,\infty)$.
  \item[{\Asc}] For all $i\in\N$, $\sigma_i^2<2\mu_i$ and $1-\gamma_i\lambda_i>0$.
\end{description}
The 1st condition in the assumption {\Asc} is used to guarantee that the uncontrolled ($u^i\equiv0$) process $X_t^{i,0}\to0$, as $t\to\infty$, $\Px$-a.s.. The 2nd condition can imply the nonnegativity of the expectation of the accumulate output of an arbitrary firm.

The following moment estimate will be used frequently in the forthcoming sections.
\begin{lemma}\label{lem:estimates}
Let assumptions {\Asa}-{\Asc} hold. Then, there exists $D_1>0$ independent of $i$ and the control $u^i$ such that
\begin{align*}
  \left\|X^{i,u^i}\right\|_\rho^2\leq D_1\left(1+\|u^i\|_\rho^2\right)\qquad \forall~ u^i\in\mathbb{U}.
\end{align*}
\end{lemma}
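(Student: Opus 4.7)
The plan is to apply Itô's formula to $e^{-\rho t}|X_t^{i,u^i}|^2$, thereby converting the weighted moment bound into a differential (in)equality that I can close with Young's inequality and the dissipativity provided by assumption {\Asc}.

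Concretely, I will first compute
\begin{equation*}
d\bigl(e^{-\rho t}|X_t^{i,u^i}|^2\bigr)
= e^{-\rho t}\Bigl[(-\rho-2\mu_i+\sigma_i^2)|X_t^{i,u^i}|^2 + 2 X_t^{i,u^i} u_t^i\Bigr]dt
+ 2\sigma_i e^{-\rho t}|X_t^{i,u^i}|^2\, dW_t^i,
\end{equation*}
using the SDE \eqref{eq:Xi-state} (note that $N^i$ does not enter the dynamics of $X^{i,u^i}$). Next I will apply the elementary inequality $2|xu|\le \varepsilon x^2 + \varepsilon^{-1}u^2$ with $\varepsilon=\rho/2$, which makes the coefficient of $|X_t^{i,u^i}|^2$ in the drift bounded above by $-\rho/2-(2\mu_i-\sigma_i^2)\le -\rho/2$ uniformly in $i$; the crucial point is that {\Asc} ensures $2\mu_i-\sigma_i^2>0$, so the only cost in discarding the $i$-dependent gain is to retain the plain $-\rho/2$.

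I will then introduce the localising sequence $\tau_k:=\inf\{t\ge0:\,|X_t^{i,u^i}|\ge k\}$ so that the stochastic integral up to time $T\wedge\tau_k$ is a genuine martingale with zero expectation. Integrating the drift inequality on $[0,T\wedge\tau_k]$, taking expectation, dropping the nonnegative boundary term $\Ex[e^{-\rho(T\wedge\tau_k)}|X_{T\wedge\tau_k}^{i,u^i}|^2]$, and then letting $k\to\infty$ and $T\to\infty$ via Fatou's lemma yields
\begin{equation*}
\frac{\rho}{2}\Ex\!\int_0^\infty e^{-\rho t}|X_t^{i,u^i}|^2\,dt \;\le\; \Ex[|X_0^i|^2] + \frac{2}{\rho}\,\Ex\!\int_0^\infty e^{-\rho t}|u_t^i|^2\,dt.
\end{equation*}
Finally, invoking {\Asa} to bound $\Ex[|X_0^i|^2]$ by $\sup_{j\in\N}\Ex[|X_0^j|^2]<\infty$ independently of $i$, I obtain the conclusion with $D_1:=\max\bigl(\tfrac{2}{\rho}\sup_{j}\Ex[|X_0^j|^2],\,\tfrac{4}{\rho^2}\bigr)$.

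The only genuinely delicate points are (i) the $i$-uniformity, which forces me to throw away the $i$-dependent dissipativity $2\mu_i-\sigma_i^2$ and rely purely on the discount $\rho$; and (ii) the justification that the stochastic integral vanishes in expectation, which the localisation argument handles cleanly since $X^{i,u^i}$ is continuous and hence locally bounded. Everything else is routine linear-SDE manipulation.
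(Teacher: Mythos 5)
Your proof is correct, but it takes a genuinely different route from the paper. The paper writes $Y_t^{i,u^i}=e^{-\rho t/2}X_t^{i,u^i}$, solves the resulting linear SDE explicitly, and then estimates $\Ex[|Y_t^{i,u^i}|^2]$ via H\"older's inequality and the Gaussian moments $\Ex[e^{2\sigma_i W_t^i}]$, obtaining decay at rate $2\mu_i+\rho-\sigma_i^2>0$; you instead apply It\^o's formula to $e^{-\rho t}|X_t^{i,u^i}|^2$, absorb the cross term $2X_t^{i,u^i}u_t^i$ with Young's inequality at level $\varepsilon=\rho/2$, use {\Asc} only to discard the extra dissipation $2\mu_i-\sigma_i^2>0$, and close the estimate by localization, optional stopping and Fatou. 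Both arguments yield a constant $D_1$ uniform in $i$ (yours via $\sup_j\Ex[|X_0^j|^2]<\infty$ from {\Asa} and the pure discount rate $\rho$; note you do not even need {\Asb}, which the paper invokes). Your Lyapunov-type estimate has the advantage of handling the adaptedness of $u^i$ automatically — the paper's H\"older step, which separates the expectation of $\int_0^t e^{-\rho s}|u_s^i|^2ds$ from that of the Brownian exponential factors, requires some care precisely because $u^i$ is only adapted — at the cost of not producing the explicit pathwise representation of $X^{i,u^i}$, which the paper reuses later (e.g.\ in the proof of Lemma \ref{lem:es-L2P+mean}). The bookkeeping in your argument (the drift coefficient $-\rho/2-(2\mu_i-\sigma_i^2)\le-\rho/2$, the martingale property of the stopped stochastic integral on $\{t\le\tau_k\}$ where $|X_t^{i,u^i}|\le k$, and the final constant $D_1=\max\bigl(\tfrac{2}{\rho}\sup_j\Ex[|X_0^j|^2],\tfrac{4}{\rho^2}\bigr)$) all checks out.
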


\begin{proof}
Define $Y^{i,u^i}_t=e^{-\frac{\rho t}{2}}X^{i,u^i}_t$. It\^{o}'s formula yields that
\begin{align*}
dY^{i,u^i}_t=Y^{i,u^i}_t\left[-(\mu_i+\rho/2)dt+\sigma_idW_t^i\right]+e^{-\frac{\rho t}{2}}u_t^idt.
\end{align*}
Since it is a linear SDE, it admits the following explicit solution: for $t\geq0$,
\begin{align*}
Y^{i,u^i}_t=e^{-\left(\mu_i+\frac{\rho}{2}+\frac{\sigma_i^2}{2}\right)t+\sigma_iW_t^i}X_0^i+\int_0^te^{-\frac{\rho s}{2}}u_s^ie^{-\left(\mu_i+\frac{\rho}{2}+\frac{\sigma_i^2}{2}\right)(t-s)+\sigma_i(W_t^i-W_s^i)} ds.
\end{align*}
Below, let $C>0$ be a generic constant independent of $i$ and the control $u^i$, but it will be different from line to line. Note that Brownian motion $W^i$ is independent of $X_0^i$, it follows from H\"{o}lder's inequality and {\Asa} that
\begin{align*}
  \Ex[|Y^{i,u^i}_t|^2]&\leq C\bigg\{e^{-(2\mu_i+\rho+\sigma_i^2)t}\Ex[|X_0^i|^2] \Ex\left[e^{2\sigma_iW_t^i}\right]\\
  &\quad+\int_0^t e^{-\rho s}|u_s^i|^2ds\times\int_0^t e^{-(2\mu_i+\rho+\sigma_i^2)(t-s)}\Ex\left[e^{2\sigma_i(W_t^i-W_s^i)}\right]ds \Bigg\}\\
  &\leq C\left[e^{-(2\mu_i+\rho-\sigma_i^2)t}+\|u^i\|_\rho^2\int_0^t e^{-(2\mu_i+\rho-\sigma_i^2)(t-s)}ds\right].
\end{align*}
Since the discount rate $\rho>0$, $2\mu_i+\rho-\sigma_i^2>2\mu_i-\sigma_i^2>0$ for all $i\geq 1$. By {\Asb}, there exists a constant $D_1>0$ independent of $i$ and $u^i$ s.t. $\|X^{i,u^i}\|_\rho^2\leq D_1(1+\|u^i\|_\rho^2)$.
Thus, we complete the proof of the lemma.
\end{proof}

A direct implication of Lemma \ref{lem:estimates} is $R_i(u)<\infty$ for all $u\in\mathbb{U}^n$.

\section{The Mean Field Game Problem}\label{sec:MFG}

This section aims to study the associated MFG problem. We first deal with the control problem for a so-called representative firm, and then solve a fixed-point problem raised by the consistence condition (c.f. \eqref{eq:consis} below).

\subsection{Optimal control for the representative firm}

In the context of MFG with \eqref{eq:Xi-state}-\eqref{eq:reward-func}, the control problem for the representative firm is described as follows: for a given $m^X=(m^X_t)_{t\geq0}\in C_{\rho/2}([0,\infty);\R)$ with
\begin{align}\label{eq:spaceCrho}
  C_{\rho/2}([0,\infty);\R)&:=\bigg\{f\in C([0,\infty);\R);~\exists~ \rho'\in[0,\rho)\textrm{s.t.}~\sup_{t\geq0}e^{-\frac{\rho' t}{2}}|f(t)|<\infty \bigg\},
\end{align}
and the type parameter $\theta=(\mu,\sigma,\gamma,\lambda,r,c)\in\mathbb{R}_+^6$, we consider the following stochastic control problem:
\begin{equation}\label{eq:repre-cont-pr}
\left\{\begin{aligned}
\sup_{u\in\mathbb{U}}R(u)
&=\sup_{u\in\mathbb{U}}\mathbb{E}\left[\int_{0}^{\infty}e^{-\rho t}\left((1-c)m^P_tX_t^u-ru_t^2\right)dt\right],\\[0.6em]
\textrm{s.t.}~dX_t^u&=X_t^u(-\mu dt+\sigma dW_t)+u_tdt,~ X_0^u=x_0\in\R,
\end{aligned}\right.
\end{equation}
where $W=(W_t)_{t\geq0}$ is an $\Fx$-Brownian motion under $(\Omega,\F,\Px)$, and $m^P=(m^P_t)_{t\geq0}$ satisfies the dynamics:
\begin{equation}\label{eq:lim-m^P}
  dm^P_t=\alpha[\beta-(1-\lambda\gamma)m^X_t-m^P_t]dt,\quad m_0^P=p_0>0.
\end{equation}
The value function for this representative agent is given by: for $(t,x)\in[0,\infty)\times\R$,
{\small\begin{equation}\label{eq:value-func}
V(t,x)=\sup_{u\in\mathbb{U}}\Ex_{t,x}\left[\int_{t}^{\infty}e^{-\rho(s-t)} \left((1-c)m^P_{s}X_{s}^u-ru_{s}^2\right)ds\right],
\end{equation}}
where $\Ex_{t,x}[\cdot]:=\Ex[\cdot|X_t=x]$. By the dynamic programming principle, the value function $V(t,x)$ formally satisfies the following HJB equation on $(t,x)\in[0,\infty)\times\R$:
\begin{align}\label{eq:HJB}
 \rho V&=
 \sup_{u\in\R}\bigg[\frac{\partial V}{\partial t}+(-\mu x+u)\frac{\partial V}{\partial x}+\frac{\sigma^2}{2}x^2\frac{\partial^2V}{\partial x^2}+(1-c)m^P_tx-ru^2\bigg].
\end{align}

We seek the following solution form for Eq.~\eqref{eq:HJB}:
\begin{align}\label{eq:Vtx}
   V(t,x)=g_t x+h_t,
\end{align}
where $g=(g_t)_{t\geq0}\in C_{\rho/2}([0,\infty);\R))\cap C^1([0,\infty);\R)$ and $h=(h_t)_{t\geq0}\in C_{\rho}([0,\infty);\R))\cap C^1([0,\infty);\R)$ which will be determined later. Note that the maximum in \eqref{eq:HJB} is attained at $\frac{1}{2r}\frac{\partial V}{\partial x}(t,x)=\frac{1}{2r}g_t$. We then plugging it into \eqref{eq:HJB} to have that $(g_t,h_t)$ satisfy that
\begin{align}\label{eq:ansatz-gh}
\begin{cases}
  \displaystyle \rho g_t=\frac{dg_t}{dt}-\mu g_t+(1-c)m^P_t,\\[0.8em]
\displaystyle  \rho h_t=\frac{dh_t}{dt}+\frac{1}{4r}g_t^2.
\end{cases}
\end{align}
The well-posedness of Eq.~\eqref{eq:ansatz-gh} is given in the following lemma:
\begin{lemma}\label{lem:wellpo-HJB}
Given $m^X\in C_{\rho/2}([0,\infty);\R)$, there exists a unique solution $(g,h)\in C_{\rho/2}([0,\infty),\R)\cap C^1([0,\infty);\R)\times C_{\rho}([0,\infty),\R)\cap C^1([0,\infty);\R)$ to Eq. \eqref{eq:ansatz-gh}. Moreover, we have
\begin{align}\label{eq:gh}
\begin{cases}
  \displaystyle g_t= (1-c)\int_{t}^{\infty}e^{-(\mu+\rho)(s-t)}m^P_{s}ds,\\[0.8em]
  \displaystyle h_t= \frac{1}{4r}\int_t^\infty e^{-\rho(s-t)}g_s^2 ds.
\end{cases}
\end{align}
\end{lemma}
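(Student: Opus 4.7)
The plan is to solve the two linear first-order ODEs in cascade: the equation for $g$ does not involve $h$, and once $g$ is pinned down the equation for $h$ becomes a linear scalar ODE with the prescribed forcing $g^2/(4r)$. Before touching $g$ I would first confirm that the data $m^P$ produced by \eqref{eq:lim-m^P} itself lies in $C_{\rho/2}([0,\infty);\R)$: variation of constants gives
\begin{equation*}
m^P_t=e^{-\alpha t}p_0+\alpha\int_0^t e^{-\alpha(t-s)}\bigl[\beta-(1-\lambda\gamma)m^X_s\bigr]\,ds,
\end{equation*}
and combining the boundedness of the first two contributions with the growth bound $|m^X_s|\le Ce^{\rho' s/2}$ for some $\rho'<\rho$ (inherited from $m^X\in C_{\rho/2}$) yields $|m^P_t|\le C' e^{\rho' t/2}$, placing $m^P$ in the same space.

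For $g$, I would rewrite the first ODE as $\frac{dg_t}{dt}=(\mu+\rho)g_t-(1-c)m^P_t$ and apply the integrating factor $e^{-(\mu+\rho)t}$. The general solution reads $g_t=e^{(\mu+\rho)t}\bigl[g_0-(1-c)\int_0^t e^{-(\mu+\rho)s}m^P_s\,ds\bigr]$; to kill the $e^{(\mu+\rho)t}$ runaway I pick the unique initial value $g_0=(1-c)\int_0^\infty e^{-(\mu+\rho)s}m^P_s\,ds$, and rearranging produces precisely the claimed representation. Absolute convergence of this improper integral follows from $|m^P_s|\le Ce^{\rho' s/2}$ and $\mu+\rho>\rho/2>\rho'/2$, and the same estimate delivers $|g_t|\le C''e^{\rho' t/2}$, so $g\in C_{\rho/2}$; differentiating under the integral sign (justified by uniform exponential domination) verifies the ODE and yields $g\in C^1$.

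Uniqueness will come from the observation that any two $C_{\rho/2}\cap C^1$ solutions differ by a homogeneous solution $z_t=z_0 e^{(\mu+\rho)t}$, which belongs to $C_{\rho/2}$ only when $z_0=0$ since $\mu+\rho>\rho/2$. The same template closes the $h$-step: once $g$ is in hand, the bound $|g_s|^2\le \widetilde C\, e^{\rho' s}$ with $\rho'<\rho$ makes $h_t:=\frac{1}{4r}\int_t^\infty e^{-\rho(s-t)}g_s^2\,ds$ absolutely convergent and places it in $C_{\rho}$, direct differentiation verifies the second ODE and $C^1$-regularity, and uniqueness again reduces to ruling out the homogeneous mode $e^{\rho t}$ by the growth constraint. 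The only mildly delicate point throughout is imposing the correct transversality condition at $+\infty$ in each step so that the explicit formulas are forced; everything else is bookkeeping about exponential decay rates.
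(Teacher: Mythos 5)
Your proposal is correct and follows essentially the same route as the paper: verify $m^P\in C_{\rho/2}$ via variation of constants, solve the linear ODE for $g$ and select the unique constant (transversality at $+\infty$) that kills the $e^{(\mu+\rho)t}$ mode, prove uniqueness by showing the homogeneous solution violates the growth constraint, and repeat the template for $h$. The only difference is cosmetic — you spell out the $h$-step and the differentiation under the integral explicitly, where the paper simply says ``similarly.''
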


\begin{proof}
We first show that $m^P$ defined by \eqref{eq:lim-m^P} belongs to $C_{\rho/2}([0,\infty);\R)$ whenever $m^X\in C_{\rho/2}([0,\infty);\R)$. Let $C>0$ be a generic constant independent of $t$, which will be different form line to line. By \eqref{eq:spaceCrho}, there exists $\rho'\in[0,\rho)$ such that $M(\rho'):=\sup_{t\geq0}e^{-\frac{\rho' t}{2}}|m_t^X|<\infty$. Then, by \eqref{eq:lim-m^P},
\begin{align*}
  e^{-\frac{\rho' t}{2}}|m_t^P|
  &\leq e^{-\frac{\rho' t}{2}}C\left(1+\int_0^t e^{-\alpha(t-s)}|m_s^X|ds\right)
  \leq C\left(1+\int_0^t e^{-\alpha(t-s)}e^{-\frac{\rho' s}{2}}|m_s^X|ds\right)\\
  &\leq C\left(1+M(\rho')\int_0^t e^{-\alpha(t-s)}ds\right)<\infty.
\end{align*}
This yields $m^P\in C_{\rho/2}([0,\infty);\R)$.

Observe that the 1st equation in \eqref{eq:ansatz-gh} is a first-order linear ODE. Its solution is given by: for $t\geq0$,
\begin{align*}
g_t=e^{(\mu+\rho)t}\left(-(1-c)\int_0^te^{-(\mu+\rho)s}m^P_{s}ds+\widetilde{D}_1\right),
\end{align*}
where $\widetilde{D}_1\in\R$ is a constant which will determined later. We next claim that $\widetilde{D}_1=(1-c)\int_0^\infty e^{-(\mu+\rho)s}m^P_{s}ds$, i.e.,
\begin{align}\label{eq:claim1}
    g_t=(1-c)\int_{t}^{\infty}e^{-(\mu+\rho)(s-t)}m^P_{s}ds,\quad t\geq0
\end{align}
is the unique solution to \eqref{eq:ansatz-gh} in $C_{\rho/2}([0,\infty);\R)\cap C^1([0,\infty);\R)$.
Otherwise, if $\widetilde{D}_1=(1-c)\int_0^\infty e^{-(\mu+\rho)s}m^P_{s}ds+\widetilde{D}_2$ for some constant $\widetilde{D}_2\neq0$, then $g$ can be expressed as:
\begin{align*}
g_t=(1-c)\int_{t}^{\infty}e^{-(\mu+\rho)(s-t)}m^P_{s}ds+\widetilde{D}_2e^{(\mu+\rho)t}.
\end{align*}
Recall that the depreciation rate $\mu>0$ and there exists $\rho'\in[0,\rho)$ such that $\sup_{t\geq0}e^{-\frac{\rho' t}{2}}|m_t^P|<\infty$. This yields that
\begin{align*}
  \sup_{t\geq0}e^{-\frac{\rho' t}{2}}\left|\int_{t}^{\infty}e^{-(\mu+\rho)(s-t)}m^P_{s}ds\right|
  &\leq C\sup_{t\geq0}e^{\left(\mu+\rho-\frac{\rho'}{2}\right)t}\int_{t}^{\infty}e^{-(\mu+\rho-\frac{\rho' }{2})s}ds\\
  &=\frac{C}{\mu+\rho-\frac{\rho' }{2}}<\infty,
\end{align*}
and for all $\tilde{\rho}\in[0,\rho)$,
\begin{align*}
  \sup_{t\geq0}|\widetilde{D_2}|e^{-\frac{\tilde{\rho} t}{2}}e^{(\mu+\rho)t}\geq\sup_{t\geq0}|\widetilde{D_2}|e^{\left(\mu+\frac{\rho}{2}\right)t}=+\infty.
\end{align*}
This contradicts with $g\in C_{\rho/2}([0,\infty);\R)$. Similarly, we can prove that the 2nd equation of \eqref{eq:ansatz-gh} has a unique solution in $C_\rho([0,\infty);\R)\cap C^1([0,\infty);\R)$. Thus, we complete the proof of the lemma.
\end{proof}

With the help of Lemma \ref{lem:wellpo-HJB}, we can establish the optimal solution to the control problem \eqref{eq:repre-cont-pr} as follows:
\begin{proposition}\label{prop:repre-optimal}
For any fixed $m^X\in C_{\rho/2}([0,\infty);\R)$, let $(g,h)$ be the unique solution to Eq.~\eqref{eq:ansatz-gh} given in Lemma~\ref{lem:wellpo-HJB}. Then, as an element of $\mathbb{U}$, $u_t^*:=\frac{1}{2r}g_t$, $t\geq0$
is the unique optimal strategy to \eqref{eq:repre-cont-pr}. Moreover, the optimal reward functional is given by
\begin{align}\label{eq:optimalreward}
    \sup_{u\in\mathbb{U}}R(u)=V(0,x_0)=g_0x_0+h_0.
\end{align}
\end{proposition}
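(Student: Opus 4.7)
I would proceed by a standard verification argument based on the ansatz \eqref{eq:Vtx}. The candidate optimizer $u^*_t = g_t/(2r)$ was derived as the pointwise maximizer of the Hamiltonian in \eqref{eq:HJB}; the task is to verify (i) that $u^*\in\mathbb{U}$, (ii) that $R(u)\le g_0x_0+h_0$ for every admissible $u$, and (iii) that equality forces $u=u^*$. Admissibility of $u^*$ is immediate from Lemma \ref{lem:wellpo-HJB}: since $g\in C_{\rho/2}([0,\infty);\R)$, there exists $\rho'\in[0,\rho)$ with $|g_t|\le Ce^{\rho' t/2}$, so $\|u^*\|_\rho^2\le C\int_0^\infty e^{-(\rho-\rho')t}dt<\infty$.

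For (ii), I would fix any $u\in\mathbb{U}$ and apply It\^o's formula to the process $e^{-\rho t}V(t,X_t^u)=e^{-\rho t}(g_tX_t^u+h_t)$. Using the ODEs \eqref{eq:ansatz-gh} to replace $g'_t-(\mu+\rho)g_t$ by $-(1-c)m^P_t$ and $h'_t-\rho h_t$ by $-g_t^2/(4r)$, and completing the square in $u$ via the algebraic identity
\begin{align*}
g_tu_t-\frac{g_t^2}{4r}-ru_t^2=-r\Bigl(u_t-\tfrac{g_t}{2r}\Bigr)^2,
\end{align*}
integration on $[0,T]$ yields the key representation
\begin{align*}
\int_0^T e^{-\rho t}\bigl[(1-c)m^P_tX_t^u-ru_t^2\bigr]dt
&=(g_0x_0+h_0)-e^{-\rho T}(g_TX_T^u+h_T)\\
&\quad-r\int_0^T e^{-\rho t}(u_t-u^*_t)^2 dt+\int_0^T e^{-\rho t}\sigma g_tX_t^u dW_t.
\end{align*}

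The main technical step is the passage $T\to\infty$, i.e.\ establishing the transversality condition and the martingale property of the It\^o integral. For the stochastic integral, the growth $|g_t|\le Ce^{\rho't/2}$ combined with Lemma \ref{lem:estimates} gives $\Ex\!\int_0^\infty e^{-2\rho t}g_t^2\sigma^2(X_t^u)^2 dt<\infty$, so it is a true martingale with zero expectation. For the boundary term, I split $\Ex[e^{-\rho T}g_TX_T^u]$ and $\Ex[e^{-\rho T}h_T]$: from the proof of Lemma \ref{lem:estimates} one extracts the pointwise bound $e^{-\rho T}\Ex[|X_T^u|^2]\le C(1+\|u\|_\rho^2)$, so Cauchy--Schwarz gives $\Ex[e^{-\rho T}|g_TX_T^u|]\le Ce^{-(\rho-\rho')T/2}\sqrt{1+\|u\|_\rho^2}\to 0$, and the analogous growth control for $h\in C_\rho$ gives $e^{-\rho T}|h_T|\to 0$. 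This is the step I expect to be the main obstacle, since it requires carefully matching the decay rate $\rho$ against the growth rates of $g$, $h$, and $X^u$ supplied by Lemma \ref{lem:estimates} and the spaces $C_{\rho/2}$, $C_\rho$.

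Taking expectations and sending $T\to\infty$ yields
\begin{align*}
R(u)=g_0x_0+h_0-r\,\Ex\!\int_0^\infty e^{-\rho t}(u_t-u^*_t)^2\,dt\;\le\;g_0x_0+h_0.
\end{align*}
This simultaneously proves optimality (the right-hand side equals $V(0,x_0)$ under the ansatz \eqref{eq:Vtx}), and uniqueness: since $r>0$, equality forces $u_t=u^*_t$, $dt\otimes d\Px$-a.e., so after taking an r.c.l.l.\ version this identifies $u$ with $u^*$ in $\mathbb{U}$, giving \eqref{eq:optimalreward}.
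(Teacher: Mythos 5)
Your proposal is correct and follows essentially the same route as the paper's proof: a verification argument applying It\^o's formula to $e^{-\rho t}(g_tX_t^u+h_t)$, completing the square to obtain $R(u)=g_0x_0+h_0-r\,\Ex\int_0^\infty e^{-\rho s}(u_s-\tfrac{1}{2r}g_s)^2ds$, and concluding optimality and uniqueness from $r>0$ together with right-continuity. You merely spell out more explicitly the admissibility of $u^*$, the martingale property of the stochastic integral, and the transversality limit, which the paper dispatches by citing Lemma \ref{lem:estimates} and the growth classes $C_{\rho/2}$ and $C_\rho$.
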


\begin{proof}
For any $u\in\mathbb{U}$, it follows from \eqref{eq:repre-cont-pr} and It\^{o}'s rule that
\begin{align*}
 e^{-\rho t}(g_tX_t^u+h_t)&=g_0x_0+h_0
 +\int_{0}^{t}e^{-\rho  s}\left[-(1-c)m^P_sX_s^u+u_sg_s-\frac{1}{4r}g_s^2\right]ds\\
  &\quad+\sigma\int_{0}^{t}e^{-\rho s}g_{s}X_{s}^udW_{s}.
\end{align*}
Then, we have from \eqref{eq:repre-cont-pr} and \eqref{eq:Vtx} that, for all $u\in\mathbb{U}$,
\begin{align*}
R(u)&=g_0x_0+h_0-\lim_{t\to\infty}\mathbb{E}[e^{-\rho t}(g_tX_t^u+h_t)]
+\lim_{t\to\infty}\int_{0}^{t}e^{-\rho s}\Ex\left[-ru_s^2+u_sg_s-\frac{1}{4r}g_s^2\right]ds\\
&=g_0x_0+h_0-r\int_{0}^{\infty}e^{-\rho s}\Ex\left[\left(u_s-\frac{1}{2r}g_s\right)^2\right]ds\\
&\leq g_0x_0+h_0=V(0,x_0).
\end{align*}
For 2nd equality in the above display, we used Lemma \ref{lem:estimates} and the fact that $g\in C_{\rho/2}([0,\infty);\R)$ and $h=(h_t)_{t\geq0}\in C_{\rho}([0,\infty);\R)$. In terms of $u^*=\frac{1}{2r}g_t$, we have $R(u^*)=V(0,x_0)$. Moreover, if $u\in\mathbb{U}$ and $\Px(u_t\neq u^*_t)>0$ for some $t\geq0$, by the right continuity of $t\mapsto u_t$ and $t\mapsto g_t$, $\int_{0}^{\infty}e^{-\rho s}\Ex[(u_s-\frac{1}{2r}g_s)^2]ds>0$, i.e., $R(u)<V(0,x_0)$. This yields that $u^*=(u_t^*)_{t\geq0}$ is the unique optimal control to \eqref{eq:repre-cont-pr}, and hence $V(0,x_0)=g_0x_0+h_0=\sup_{u\in\mathbb{U}}R(u)$.
Thus, we complete the proof of the proposition. 
\end{proof}

\subsection{The fixed point problem}

For a given $m^X\in C_{\rho/2}([0,\infty);\R)$ with $m_0^X=x_0\in\R$, the associated {\it consistence condition} in the MFG is given by
\begin{equation}\label{eq:consis}
  m^X_t=\Ex[X_t^{u^*}],\quad t\geq0,
\end{equation}
where $X^u=(X_t^u)_{t\geq0}$ and $u^*=(u^*_t)_{t\geq0}\in\mathbb{U}$ are respectively given by \eqref{eq:repre-cont-pr} and in Proposition \ref{prop:repre-optimal}. From \eqref{eq:repre-cont-pr} and Proposition \ref{prop:repre-optimal}, it follows that
\begin{align}\label{eq:Extustar}
   \Ex[X_t^{u^*}]
  & =x_0 - \mu\int_0^t\Ex[X_s^{u^*}]ds+\int_0^t\frac{1}{2r}g_sds,
\end{align}
where $g$ is given by \eqref{eq:gh}. By solving Eq.~\eqref{eq:Extustar} with unknown $\Ex[X_t^{u^*}]$, we arrive at
\begin{align}\label{eq:def-mapping00}
 &\Ex[X^{u^*}_t]=e^{-\mu t}x_0+\frac{1-c}{2r}\int_{0}^{t}e^{-\mu(t-s)}\left(\int_{s}^{\infty}e^{-(\mu+\rho)(v-s)}m^P_{v}dv\right) ds.
\end{align}
where $m^P=(m_t^P)_{t\geq0}$ satisfies \eqref{eq:lim-m^P}. 
By Lemma~\ref{lem:estimates} with $g\in C_{\rho/2}([0,\infty),\R)$, $(\Ex[X^{u^*}_t])_{t\geq0}\in C_{\rho/2}([0,\infty),\R)$. Thus, we define $\mathcal{L}(m^X):C_{\rho/2}([0,\infty),\R)\to C_{\rho/2}([0,\infty),\R)$ as:
\begin{align}\label{eq:amppL}
[\mathcal{L}(m^X)]_t:=\Ex[X^{u^*}_t],\quad \forall t\geq0.
\end{align}
Then, the consistence condition \eqref{eq:consis} is equivalent to the existence of fixed points of $\mathcal{L}$ in $C_{\rho/2}([0,\infty);\R)$. That is, we want to find a fixed point $\bar{m}^X\in C_{\rho/2}([0,\infty);\R)$ such that $\bar{m}^X=\mathcal{L}(\bar{m}^X)$.

Equivalently, the fixed point $\bar{m}^X$ (if it exists) together with the price process $\bar{m}^P$ and the optimal control $\bar{u}^*$, should satisfy
{\small\begin{align}\label{eq:pxu}
\begin{cases}
  \displaystyle  d\bar{m}^P_t=\alpha[\beta-(1-\lambda\gamma)\bar{m}^X_t-\bar{m}^P_t]dt,~ \bar{m}^P_0=p_0,\\[0.8em]
  \displaystyle d\bar{m}^X_t=(-\mu \bar{m}^X_t+\bar{u}^*_t)dt,~ \bar{m}^X_0=x_0,\\[0.8em]
  \displaystyle d\bar{u}^*_t=\left[(\mu+\rho)\bar{u}^*_t-\frac{1-c}{2r}\bar{m}^P_t\right]dt,~ \bar{u}^*_0=\frac{1-c}{2r}\int_{0}^{\infty}e^{-(\mu+\rho)t}\bar{m}^P_tdt.
\end{cases}
\end{align}}
Thus, the existence and uniqueness of a fixed point is equivalent to the well-posedness of \eqref{eq:pxu}.

We next consider the following cubic equation with unknown single variable $K$ given by
\begin{align}\label{eq:charact}
K^3+(\alpha-\rho)K^2-AK-B=0,
\end{align}
where the coefficients of \eqref{eq:charact} are given by
\begin{align}\label{eq:AB}
A:=\mu^2+\rho\mu+\alpha\rho>0,\quad
B:= \alpha\left[\mu(\mu+\rho)+\frac{(1-\lambda\gamma)(1-c)}{2r}\right]>0.
\end{align}
The discriminant of the cubic equation \eqref{eq:charact} is given by
\begin{align}\label{eq:deltadis}
\Delta:=-27B^2+[18(\alpha-\rho)A+4(\alpha-\rho)^3]B+[(\alpha-\rho)^2A^2+4A^3].
\end{align}
We further have that
\begin{itemize}
    \item if $\Delta>0$, Eq.~\eqref{eq:charact} has three roots $K_1,K_2,K_3$ satisfying $K_1<K_2<0<K_3$;
    \item if $\Delta=0$, Eq.~\eqref{eq:charact} has two roots $K_1,K_2$ satisfying $K_1<0<K_2$;
    \item if $\Delta<0$, Eq.~\eqref{eq:charact} has three roots $K_1,K_2\in\mathbb{C}$ and $K_3>0$, where $K_1=K_R+K_I i$ and $K_2=K_R-K_I i$ for some $K_R<0$ and  $K_I\neq0$.
\end{itemize}

Let $C_b([0,\infty);\R)$ be the set of bounded continuous functions from $[0,\infty)$ to $\R$. The main result of this section is given as follows:
\begin{theorem}\label{thm:fixed-point}
Eq.~\eqref{eq:pxu} has a unique solution $(\bar{m}^P,\bar{m}^X,\bar{u}^*)\in C_b([0,\infty);\R)^3$.  Moreover, the solution component $\bar{m}^P=(\bar{m}_t^P)_{t\geq0}$ admits the following closed-form representation:\\

\noindent{\bf(i)} if $\Delta>0$,
 \begin{align*}
      \bar{m}^P_t&=\frac{-\alpha(\beta-(1-\lambda\gamma)x_0-p_0)+(p_0-p^*)K_2}{K_2-K_1}e^{K_1 t}\nonumber\\
      &\quad+\frac{\alpha(\beta-(1-\lambda\gamma)x_0-p_0)-(p_0-p^*)K_1}{K_2-K_1}e^{K_2 t}+p^*.
  \end{align*}

\noindent{\bf(ii)} if $\Delta=0$,
\begin{align*}
       \bar{m}^P_t&=p^*+e^{K_1t}\{p_0-p^*+[\alpha(\beta-(1-\lambda\gamma)x_0-p_0)-K_1(p_0-p^*)]t\}.
      \end{align*}

\noindent{\bf(iii)} if $\Delta<0$,
 \begin{align*}
       \bar{m}^P_t=p^*+e^{K_R t}\bigg[&\frac{-\alpha(\beta-(1-\lambda\gamma)x_0-p_0)+(p_0-p^*)K_I}{K_I-K_R}\cos(K_I t)\\
        &\quad+\frac{\alpha(\beta-(1-\lambda\gamma)x_0-p_0)-(p_0-p^*)K_R}{K_I-K_R}\sin(K_R t)\bigg].
      \end{align*}
The parameter in the above expressions is defined by
\begin{align}\label{eq:stable-p}
p^*=\frac{\alpha\mu\beta(\mu+\rho)}{B}.
\end{align}
\end{theorem}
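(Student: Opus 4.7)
The plan is to treat \eqref{eq:pxu} as a three-dimensional linear affine ODE system and solve it explicitly by eigen-decomposition, with the requirement that the solution lie in $C_b$ selecting the unique trajectory out of a three-parameter family. First I would locate the unique equilibrium of the vector field in \eqref{eq:pxu}: setting the three right-hand sides to zero yields a linear system whose $\bar m^P$-component is exactly $p^*$ from \eqref{eq:stable-p} (after using the definition of $B$ in \eqref{eq:AB}). Translating by this equilibrium reduces \eqref{eq:pxu} to a homogeneous linear system $\dot y = My$ on $\R^3$, and expanding $\det(KI-M)$ along the first column produces the cubic $K^3+(\alpha-\rho)K^2-AK-B$ of \eqref{eq:charact}.

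Next I would carry out the root analysis announced just before the theorem. Set $f(K):=K^3+(\alpha-\rho)K^2-AK-B$; since $f(0)=-B<0$ and $f(+\infty)=+\infty$, $f$ always has at least one strictly positive real root. Combining Vieta's formulas (sum of roots $\rho-\alpha$, pairwise sum $-A<0$, product $B>0$) with a sign analysis of $f$ at its two critical points (which are real, as $(\alpha-\rho)^2+3A>0$, and which straddle $0$ because their product equals $-A/3<0$), I would verify: in case (i) the other two real roots $K_1,K_2$ are strictly negative; in case (ii) the double root $K_1$ is strictly negative and the simple root $K_2$ is positive; in case (iii) the complex pair has $K_R<0$ while the real root $K_3$ is positive (the latter using $|K_1|^2 K_3 = B>0$ together with $2K_R K_3 = -A-|K_1|^2<0$). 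In every case the subspace of bounded trajectories of $\dot y=My$ is therefore exactly two-dimensional.

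Consequently the general $C_b$ solution for $\bar m^P$ has the form $p^*+c_1\phi_1(t)+c_2\phi_2(t)$, where $(\phi_1,\phi_2)$ are the two stable fundamental modes in each case: $(e^{K_1 t},e^{K_2 t})$ in (i), $(e^{K_1 t},te^{K_1 t})$ in (ii), and $(e^{K_R t}\cos(K_I t),e^{K_R t}\sin(K_I t))$ in (iii). The two constants $c_1,c_2$ are pinned down by $\bar m^P_0=p_0$ together with $\dot{\bar m}^P_0=\alpha[\beta-(1-\lambda\gamma)x_0-p_0]$, the latter obtained by substituting $\bar m^X_0=x_0$ into the first line of \eqref{eq:pxu}; solving the resulting $2\times 2$ linear systems case by case yields the three closed-form expressions claimed. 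The third condition $\bar u^*_0=\tfrac{1-c}{2r}\int_0^\infty e^{-(\mu+\rho)t}\bar m^P_t\,dt$ in \eqref{eq:pxu} is then automatic: a variation-of-constants integration of the third ODE shows that, given bounded $\bar m^P$, this specific value of $\bar u^*_0$ is the unique one keeping $\bar u^*$ bounded, and boundedness of $\bar m^X$ follows from the second ODE by another variation-of-constants argument using $\mu>0$.

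The main obstacle is the root-structure analysis in the degenerate cases $\Delta\le 0$: one must argue cleanly, via Vieta's formulas and the signs $A,B>0$, that the double root in (ii) is strictly negative (equivalently the simple root is strictly positive) and that the complex conjugate pair in (iii) has strictly negative real part. Once this is in hand, the remainder of the proof reduces to routine linear-ODE theory and the matching of initial data.
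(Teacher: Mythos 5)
Your proposal is correct and follows essentially the same route as the paper: both reduce \eqref{eq:pxu} to a constant-coefficient linear problem whose characteristic polynomial is the cubic \eqref{eq:charact} (the paper by eliminating $\bar m^X,\bar u^*$ to obtain a scalar third-order ODE for $\bar m^P$, you by eigen-decomposing the $3\times 3$ system --- the same cubic either way), discard the single unstable root via the boundedness requirement, take $p^*$ as the particular/equilibrium solution, and determine the two remaining constants from $\bar m^P_0=p_0$ and $\frac{d\bar m^P_t}{dt}\big|_{t=0}=\alpha[\beta-(1-\lambda\gamma)x_0-p_0]$. Your Vieta-based verification of the root signs when $\Delta\le 0$ and your observation that the prescribed value of $\bar u^*_0$ is precisely the condition keeping $\bar u^*$ bounded make explicit two points the paper leaves implicit, but they do not alter the argument.
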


\begin{remark}Since $\bar{m}^X=(\bar{m}_t^X)_{t\geq0}$ is a fixed point of the mapping ${\cal L}$ defined by \eqref{eq:amppL}, we may explain $\bar{m}^X$ as the expectation of production output rate under the optimal output adjusted rate $\bar{u}^*$ of the representative firm. Then, $\bar{m}^P=(\bar{m}_t^P)_{t\geq0}$ may be explained as the price process of
goods produced by the representative firm under the optimal output adjusted rate. Moreover, by Theorem \ref{thm:fixed-point}, we have
\begin{align*}
  (\bar{m}^P_t,\bar{m}^X_t,\bar{u}^*_t)\to \left(p^*,\frac{\beta-p^*}{1-\lambda\gamma},\frac{\mu(\beta-p^*)}{1-\lambda\gamma}\right),~t\to\infty
\end{align*}
Together with Lemma~\ref{lem:cov-L2P+mean} below, $p^*$ given by \eqref{eq:stable-p} may be explained as the stationary equilibrium price in
the commodity market (in the sense that the number of firms grows large).
\end{remark}

\begin{proof}[Proof of Theorem \ref{thm:fixed-point}]
By differentiating on both sides of the 1st Eq.~in \eqref{eq:pxu}, we have
\begin{align*}
\frac{1}{\alpha}\frac{d^2\bar{m}^P_t}{dt^2}+\frac{d\bar{m}^P_t}{dt}+(1-\lambda\gamma)\frac{d\bar{m}^X_t}{dt}=0.
\end{align*}
It then follows from the 2nd equation in \eqref{eq:pxu}  that
\begin{align*}
\frac{1}{\alpha}\frac{d^2\bar{m}^P_t}{dt^2}+\frac{d\bar{m}^P_t}{dt}-(1-\lambda\gamma)\mu \bar{m}^X_t+(1-\lambda\gamma)\bar{u}^*_t=0.
\end{align*}
Substitute the 1st Eq. in \eqref{eq:pxu} into the above display, we have
\begin{equation}\label{eq:app-p1}
  \frac{1}{\alpha}\frac{d^2\bar{m}^P_t}{dt^2}+\left(1+\frac{\mu}{\alpha}\right)\frac{d\bar{m}^P_t}{dt}+\mu \bar{m}^P_t+(1-\lambda\gamma)\bar{u}^*_t-\mu\beta=0.
\end{equation}
By differentiating on both sides of \eqref{eq:app-p1}, it holds that
{\small\begin{align*}
\frac{1}{\alpha}\frac{d^3\bar{m}^P_t}{dt^3}+\left(1+\frac{\mu}{\alpha}\right)\frac{d^2\bar{m}^P_t}{dt^2}+\mu \frac{d\bar{m}^P_t}{dt}+(1-\lambda\gamma)\frac{d\bar{u}^*_t}{dt}=0.
\end{align*}}
Using the 3rd Eq. of \eqref{eq:pxu}, it follows that
\begin{align*}
\frac{1}{\alpha}\frac{d^3\bar{m}^P_t}{dt^3}+\left(1+\frac{\mu}{\alpha}\right)\frac{d^2\bar{m}^P_t}{dt^2}+\mu \frac{d\bar{m}^P_t}{dt}-\frac{(1-\lambda\gamma)(1-c)}{2r}\bar{m}^P_t+(\mu+\rho)(1-\lambda\gamma)\bar{u}^*_t=0.
\end{align*}
Thus, using \eqref{eq:app-p1}, we deduce that
{\small\begin{equation*}
    u^*_t=\frac{1}{1-\lambda\gamma}\left[\mu\beta-\left(\frac{1}{\alpha}\frac{d^2\bar{m}^P_t}{dt^2}+\left(1+\frac{\mu}{\alpha}\right)\frac{d\bar{m}^P_t}{dt}+\mu \bar{m}^P_t\right)\right].
\end{equation*}}
Substitute it into the above equation, $\bar{m}^P$ obeys that
\begin{equation}\label{eq:p-diff}
  \frac{d^3\bar{m}^P_t}{dt^3}+(\alpha-\rho)\frac{d^2\bar{m}^P_t}{dt^2}-A \frac{d\bar{m}^P_t}{dt}-B\bar{m}^P_t +\alpha\mu\beta(\mu+\rho)=0,
\end{equation}
where $\bar{m}^P_0=p_0$, the constants $A,B$ are defined by \eqref{eq:AB}, and $\lim_{t\to0}\frac{d\bar{m}^P_t}{dt}=\alpha[\beta-(1-\lambda\gamma)x_0-p_0]$.
Note that $p^*=\alpha\mu\beta(\mu+\rho)/B>0$ is a special solution of \eqref{eq:p-diff}. Then, the characteristic equation corresponding to the homogeneous part of \eqref{eq:p-diff} is given by
\begin{align*}
f(K):=K^3+(\alpha-\rho)K^2-AK-B=0.
\end{align*}
The two real roots of $f'(K)=3K^2+2(\alpha-\rho)-A=0$ satisfy
\begin{align*}
\frac{-(\alpha-\rho)-\sqrt{(\alpha-\rho)^2+3A}}{3}<0, \quad \frac{-(\alpha-\rho)+\sqrt{(\alpha-\rho)^2+3A}}{3}>0.
\end{align*}
Then, by $f(0)=-B<0$, the characteristic equation \eqref{eq:charact}  has actually one positive root denoted by $K_3$. Using the boundness of $\bar{m}^P=(\bar{m}_t^P)_{t\geq0}$, we can only use the other two roots of the characteristic equation, as the positive root yields a term with form $e^{K_3 t}$. Hence, the solution of \eqref{eq:p-diff} stated in Theorem \ref{thm:fixed-point} is uniquely determined via the discriminant $\Delta=-27B^2+[18(\alpha-\rho)A+4(\alpha-\rho)^3]B+[(\alpha-\rho)^2A^2+4A^3]$ of $f(K)=0$. Thus, we complete the proof of the theorem. 
\end{proof}

\section{Approximating Nash Equilibrium}\label{sec:approx-Nash}

This section will establish an approximating Nash equilibrium of the MFG problem described in Section~\ref{sec:MFG}.

Let $\bar{m}^X=(\bar{m}^X_t)_{t\geq0}$ and $\bar{m}^P=(\bar{m}^P)_{t\geq0}$ be the solution components of \eqref{eq:pxu} (c.f. Theorem \ref{thm:fixed-point}). We introduce $\bar{g}^i=(\bar{g}_t^i)_{t\geq0}\in C_{\rho/2}([0,\infty);\R)\cap C^1([0,\infty);\R)$ satisfying
\begin{equation}\label{eq:bar-gi}
  \frac{d\bar{g}_t^i}{dt}=(\rho+\mu_i)\bar{g}_t^i-(1-c_i)\bar{m}_t^P.
\end{equation}
This is equivalent to
\begin{equation}\label{eq:bar-gi-ex}
  \bar{g}_t^i=(1-c_i)\int_{t}^{\infty}e^{-(\mu_i+\rho)(s-t)}\bar{m}^P_{s}ds,\quad t\geq0.
\end{equation}
Based upon \eqref{eq:bar-gi-ex}, let us define
\begin{equation}\label{eq:u*i}
  u^{*,i}_t=\frac{1}{2r_i}\bar{g}_t^i,\quad t\geq0.
\end{equation}
We next rewrite the reward functional \eqref{eq:reward-func} for firm $i$, but highlight the dependence on the number $n$ of firms, i.e.,
{\small\begin{equation}\label{eq:Rni}
  R_i^{(n)}(u)=\Ex\left[\int_{0}^{\infty}e^{-\rho t}\left((1-c_i)P_t^{u,(n)} X_t^{i,u^i}-r_i(u_t^i)^2\right)dt\right],
\end{equation}}
where $P^{u,{(n)}}=(P^{u,{(n)}}_t)_{t\geq0}$ is the price process \eqref{eq:Price} with the dependence on $n$.

The main result of this section is stated as follows:
\begin{theorem}\label{thm:main}
Let assumptions {\Asa}-{\Asc} hold. Recall $u^{*,(n)}:=(u^{*,1},\ldots,u^{*,n})$ defined by \eqref{eq:u*i}, we have, for all $i=1,\ldots,n$,
\begin{align}\label{eq:approNElimit}
    \sup_{u^i\in\mathbb{U}}R_i^{(n)}(u^i,u^{*,-i})\leq R_i^{(n)}(u^{*,(n)})+\varepsilon_n
\end{align}
with $\lim_{n\to\infty}\varepsilon_n=0$ and the policy $(u^i,u^{*,-i})$ being defined in Definition \ref{def:Nash}.
\end{theorem}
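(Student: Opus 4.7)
The strategy I propose is the standard MFG verification: for each firm $i$, identify an auxiliary limit control problem in which $u^{*,i}$ is genuinely optimal, then show that the $n$-player reward is close to the limit reward uniformly over the relevant controls. I would introduce the auxiliary reward functional
\begin{align*}
\tilde{R}_i(u^i) := \Ex\left[\int_0^\infty e^{-\rho t}\bigl((1-c_i)\bar{m}_t^P X_t^{i,u^i} - r_i(u_t^i)^2\bigr)dt\right],
\end{align*}
obtained by replacing the endogenous price $P^{u,(n)}$ with the MFG limit $\bar{m}^P$ of Theorem~\ref{thm:fixed-point}. Repeating the verification argument of Proposition~\ref{prop:repre-optimal} with firm $i$'s parameters $\theta_i$ in place of $\theta$, the control $u^{*,i}$ in \eqref{eq:u*i} is the unique $\mathbb{U}$-maximizer of $\tilde{R}_i$, and the completion-of-squares identity yields $\tilde{R}_i(u^{*,i}) - \tilde{R}_i(u^i) = r_i\|u^i - u^{*,i}\|_\rho^2$.

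Next I would establish the perturbation estimate
\begin{align*}
\bigl|R_i^{(n)}(u^i,u^{*,-i}) - \tilde{R}_i(u^i)\bigr| \leq (1-c_i)\bigl\|P^{(u^i,u^{*,-i}),(n)} - \bar{m}^P\bigr\|_\rho \bigl\|X^{i,u^i}\bigr\|_\rho.
\end{align*}
I would split the price discrepancy via the triangle inequality into a $u^i$-independent benchmark piece $\|P^{(u^{*,(n)}),(n)} - \bar{m}^P\|_\rho$, which tends to $0$ by the forthcoming Lemma~\ref{lem:cov-L2P+mean} (a law-of-large-numbers/propagation-of-chaos estimate on the empirical average $\tfrac{1}{n}\sum_j X_t^{j,u^{*,j}}$ together with control of the compensated Poisson integrals), and a self-perturbation piece $\|P^{(u^i,u^{*,-i}),(n)} - P^{(u^{*,(n)}),(n)}\|_\rho$. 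Since $u^i$ enters the price SDE~\eqref{eq:Price} only through the $1/n$-scaled terms $\tfrac{1}{n}X^{i,u^i}$ in the drift and $\tfrac{\alpha\gamma_i}{n}X^{i,u^i}dN^i$ in the jump, a Gronwall argument combined with Lemma~\ref{lem:estimates} bounds the self-perturbation by $\tfrac{C}{n}(1+\|u^i\|_\rho)$. Applying Lemma~\ref{lem:estimates} once more to $\|X^{i,u^i}\|_\rho$ then produces
\begin{align*}
\bigl|R_i^{(n)}(u^i,u^{*,-i}) - \tilde{R}_i(u^i)\bigr| \leq C\bigl(\eta_n + \tfrac{1}{n}(1+\|u^i\|_\rho)\bigr)(1+\|u^i\|_\rho),
\end{align*}
with some deterministic $\eta_n \to 0$ independent of $u^i$.

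Applying this with $u^i = u^{*,i}$ (noting $\sup_i\|u^{*,i}\|_\rho < \infty$ since $\bar{m}^P$ is bounded and $\theta_i\to\theta$) yields $R_i^{(n)}(u^{*,(n)}) \geq \tilde{R}_i(u^{*,i}) - C(\eta_n + 1/n)$. For arbitrary $u^i\in\mathbb{U}$, combining with the completion-of-squares identity gives
\begin{align*}
R_i^{(n)}(u^i,u^{*,-i}) - R_i^{(n)}(u^{*,(n)}) \leq -r_i\|u^i - u^{*,i}\|_\rho^2 + C\bigl(\eta_n + \tfrac{1}{n}(1+\|u^i\|_\rho)\bigr)(1+\|u^i\|_\rho) + C(\eta_n + \tfrac{1}{n}).
\end{align*}
For $n$ large the negative quadratic term dominates the quadratic-in-$\|u^i\|_\rho$ error, so any $u^i$ with $R_i^{(n)}(u^i,u^{*,-i}) \geq R_i^{(n)}(u^{*,(n)})$ must satisfy $\|u^i\|_\rho \leq M$ for some $M$ independent of $n$ and $i$. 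Substituting this bound back yields \eqref{eq:approNElimit} with $\varepsilon_n := C'(\eta_n + 1/n)(1+M)^2 \to 0$.

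The main obstacle is the uniform $L^2_\rho$-convergence of the benchmark price $P^{(u^{*,(n)}),(n)}$ to $\bar{m}^P$: the firms are non-identical (the types $\theta_i$ only converge to $\theta$), the initial outputs $X_0^i$ are independent but not identically distributed, and the jump contributions $\tfrac{\alpha}{n}\sum_i\gamma_i X_t^{i,u^{*,i}}dN_t^i$ must be compensated and their quadratic variation controlled. The remaining ingredients (the Gronwall self-perturbation and the completion-of-squares bootstrap bounding $\|u^i\|_\rho$) are routine once the benchmark convergence is in hand.
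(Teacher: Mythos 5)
Your proposal is correct and follows essentially the paper's own route: the same limiting functional $\bar R_i$ with $u^{*,i}$ optimal via the verification/completion-of-squares argument of Proposition \ref{prop:repre-optimal} adapted to type $\theta_i$, the same H\"{o}lder decomposition of $R_i^{(n)}-\bar R_i$ into the $O(1/n)$ self-impact of firm $i$ on the price plus the benchmark convergence $P^{*,(n)}\to\bar m^P$ of Lemma \ref{lem:cov-L2P+mean}, and the same quadratic-cost coercivity to confine the supremum to controls with $\|u^i\|_\rho$ bounded uniformly in $i$ and $n$. The only (harmless) deviation is the localization step: the paper's Lemma \ref{lem:bound-u} bounds $\|u^i\|_\rho$ over the set of controls with nonnegative reward, whereas you bound it over controls beating $R_i^{(n)}(u^{*,(n)})$ using the exact identity $\tilde R_i(u^{*,i})-\tilde R_i(u^i)=r_i\|u^i-u^{*,i}\|_\rho^2$ --- the same coercivity idea, and if anything slightly cleaner since it sidesteps the paper's implicit reliance on $R_i^{(n)}(u^{*,(n)})\ge 0$ when reducing to $\widetilde{\mathbb{U}}^i$.
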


To prove Theorem \ref{thm:main}, we need the following auxiliary results. Consider the $i$-th firm's state process  $X^{i,*}=(X^{i,*}_t)_{t\geq0}$ with control $u^{*,i}$ given by: $X_0^{i,*}=X_0^i$,
\begin{equation}\label{eq:X*i}
  dX_t^{i,*}=X_t^{i,*}(-\mu_idt+\sigma_i dW_t^i)+u^{*,i}_tdt.
\end{equation}
The corresponding price dynamics of goods is then given by: $P_0^{*,(n)}=p_0$,
\begin{align}\label{eq:Pn*}
  dP_t^{*,(n)}=\alpha\left(\beta-\frac{1}{n}\sum_{i=1}^n X_t^{i,*}-P_t^{*,(n)}\right)dt+\frac{\alpha }{n}\sum_{i=1}^{n}\gamma_iX_{t}^{i,*}dN_t^{i}.
\end{align}
Then, we have that
\begin{lemma}\label{lem:es-L2P+mean}
Let assumptions {\Asa}-{\Asc} hold. Then, there exists a constant $D_2>0$ independent of $n$ such that
\begin{equation}\label{eq:L2P+mean}
  \sup_{n\geq1}\sup_{t\geq0}\Ex\left[\left|P_t^{*,(n)}\right|^2+\left|\overline{X}_t^{*,(n)}\right|^2\right]\leq D_2,
\end{equation}
where $\overline{X}_t^{*,(n)}$ for $t\geq0$ is defined by
\begin{align}\label{eq:aveXstar}
    \overline{X}_t^{*,(n)}:=\frac{1}{n}\sum_{i=1}^n X_t^{*,i},\quad t\geq0.
\end{align}
\end{lemma}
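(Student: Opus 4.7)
The plan is to reduce the lemma to three uniform bounds: on the optimal controls $u^{*,i}$, on the second moments $v_i(t):=\Ex[(X_t^{i,*})^2]$, and then on $\Ex[|\overline{X}_t^{*,(n)}|^2]$ and $\Ex[|P_t^{*,(n)}|^2]$ themselves. First I would invoke Theorem~\ref{thm:fixed-point}: since $\bar{m}^P\in C_b([0,\infty);\R)$, the representation \eqref{eq:bar-gi-ex} at once gives $|\bar{g}_t^i|\le (1-c_i)\|\bar{m}^P\|_\infty/(\mu_i+\rho)$, and then \eqref{eq:u*i} combined with \Asb{} (which keeps $r_i,\mu_i,c_i$ bounded away from their degenerate values in the limit) yields a constant $K_0$ with $\sup_{i\geq 1,\,t\geq 0}|u_t^{*,i}|\le K_0$.

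Next I would apply It\^o's formula to $(X_t^{i,*})^2$ using \eqref{eq:X*i}. Taking expectations kills the Brownian integral and leaves the ODE
\[
v_i'(t)=-(2\mu_i-\sigma_i^2)\,v_i(t)+2u_t^{*,i}\,\Ex[X_t^{i,*}].
\]
Bounding $|\Ex[X_t^{i,*}]|\le v_i(t)^{1/2}$ and applying Young's inequality with $\epsilon=(2\mu_i-\sigma_i^2)/2>0$ (positive by \Asc) yields the linear differential inequality
\[
v_i'(t)\le -\tfrac{2\mu_i-\sigma_i^2}{2}\,v_i(t)+\frac{2K_0^2}{2\mu_i-\sigma_i^2}.
\]
Assumption \Asb{} provides $\delta>0$ and $i_0\in\N$ with $2\mu_i-\sigma_i^2\ge\delta$ for $i\ge i_0$, while the finitely many remaining indices contribute only a finite extra constant. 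Combined with $\sup_i\Ex[|X_0^i|^2]<\infty$ from \Asa, Grönwall delivers $V:=\sup_{i\geq 1,\,t\geq 0}v_i(t)<\infty$.

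The bound on the empirical average is immediate from Jensen's inequality: $\Ex[|\overline{X}_t^{*,(n)}|^2]\le n^{-1}\sum_{i=1}^n v_i(t)\le V$. For the price process I would solve \eqref{eq:Pn*} by variation of constants and split each jump measure via $dN_s^i=dM_s^i+\lambda_i\,ds$ with $M^i$ the independent compensated Poisson martingales. The resulting absolutely continuous part is controlled in $L^2$ by Cauchy--Schwarz together with $V$ and $\sup_i\gamma_i\lambda_i<\infty$ from \Asb, whereas independence of the $M^i$ and It\^o's isometry give
\[
\Ex\!\left[\bigg(\frac{\alpha}{n}\sum_{i=1}^n\gamma_i\!\int_0^t\! e^{-\alpha(t-s)}X_s^{i,*}\,dM_s^i\bigg)^{\!2}\right]=\frac{\alpha^2}{n^2}\sum_{i=1}^n\gamma_i^2\lambda_i\!\int_0^t\! e^{-2\alpha(t-s)}v_i(s)\,ds\le\frac{C}{n},
\]
uniformly in $t$. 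Summing the contributions produces the claimed constant $D_2$.

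The main obstacle is uniformity in $i$ in Step 2: the dissipation rate $2\mu_i-\sigma_i^2$ must be bounded below by a positive constant, for otherwise the bound on $v_i(t)$ degenerates and the average $\Ex[|\overline{X}_t^{*,(n)}|^2]$ cannot be controlled independently of $n$. This is precisely where \Asb{} (convergence of the type parameters) has to combine with \Asc{} (positivity at the limit too) to separate a finite initial segment from a uniformly dissipative tail. The jump-martingale term in Step 4 looks delicate at first sight, but its $1/n^2$ prefactor together with the diagonal structure from independence produces the crucial $O(1/n)$ factor at no extra cost.
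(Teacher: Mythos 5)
Your proof is correct and follows essentially the same route as the paper's: a uniform bound on $\bar g^i$ (hence on $u^{*,i}$), a uniform-in-$i,t$ second-moment bound for $X^{i,*}$, Jensen's inequality for the empirical average, and variation of constants for $P^{*,(n)}$ combined with independence of the Poisson processes. The only cosmetic differences are that the paper obtains the second-moment bound from the explicit solution of the linear SDE rather than from your It\^o/Gr\"onwall differential inequality, and it invokes independence of the $N^i$ directly where you spell out the compensated-martingale decomposition; both arguments rest on the same (implicitly assumed) uniform lower bound $\inf_{i\geq1}(2\mu_i-\sigma_i^2)>0$, which you at least flag explicitly.
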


\begin{proof}
Note that $\bar{m}^P\in C_b([0,\infty);\R)$. Then, by {\Asb}, $\bar{g}^i$ defined by \eqref{eq:bar-gi-ex} is uniformly bounded, i.e.,
\begin{align}\label{eq:bg-bound}
  \sup_{i\geq1}\|\bar{g}^i\|_{\infty}:=\sup_{i\geq1}\sup_{t\geq0}|\bar{g}_t^i|<\infty.
\end{align}
In addition, the process $X^{i,*}=(X^{i,*}_t)_{t\geq0}$ defined in \eqref{eq:X*i} has the following closed-form representation:
\begin{align*}
X_t^{i,*}=e^{-\left(\mu_i+\frac{\sigma_i^2}{2}\right)t}e^{\sigma_iW_t^i}X_0^i+\frac{1}{2r_i} \int_{0}^{t}\bar{g}_s^ie^{-\left(\mu_i+\frac{\sigma_i^2}{2}\right)(t-s)}e^{\sigma_i(W_t^i-W_s^i)}ds.
\end{align*}
It follows from assumptions {\Asa} and {\Asb} that
\begin{align}\label{eq:X*-L2bound}
  \sup_{i\geq1}\sup_{t\geq0} \Ex\left[\left|X_t^{i,*}\right|^2\right]
  &\leq \sup_{i\geq1}\sup_{t\geq0}C\left(e^{-(2\mu_i-\sigma_i^2)t}+\int_0^t e^{-(2\mu_i-\sigma_i^2)(t-s)}ds\right)\nonumber\\
  &\leq\sup_{i\geq1}\sup_{t\geq0}C\left(1+\frac{1}{(2\mu_i-\sigma_i^2)}(1-e^{-(2\mu_i-\sigma_i^2)t})\right)<\infty,
\end{align}
where $C>0$ is a generic positive constant independent of $i$ and $t$. 
In light of Jensen's inequality with \eqref{eq:X*-L2bound}, it holds that
\begin{align*}
\sup_{n\geq1}\sup_{t\geq0}\Ex[|\overline{X}_t^{*,(n)}|^2] \leq\sup_{n\geq1}\frac{1}{n}\sum_{i=1}^n\sup_{t\geq0}\Ex[|X_t^{i,*}|^2]<\infty.
\end{align*}
Using Theorem V.56 of \cite{Protter05}, the price process $P^{*,(n)}=(P_t^{*,(n)})_{t\geq0}$ admits that
\begin{align*}
    P_t^{*,(n)}=e^{-\alpha t}p_0+\alpha\int_0^t e^{-\alpha(t-s)}(\beta-\overline{X}_s^{*,(n)})ds+\frac{\alpha}{n}\sum_{i=1}^{n}\gamma_i\int_{0}^{t}e^{-\alpha(t-s)}X_s^{i,*}dN_s^i.
\end{align*}
By the independence of $N^i$ for $i\geq1$, and {\Asb}, there exists $C>0$ independent of $n,t$ s.t.
\begin{align*}
  \sup_{n\geq1}\sup_{t\geq0} \Ex\left[\left|P_t^{*,(n)}\right|^2\right]
  &\leq \sup_{n\geq1}\sup_{t\geq0}C\Bigg\{1+\int_0^t e^{-2\alpha(t-s)}ds+\frac{1}{n}\sum_{i=1}^n\lambda_i\int_0^t e^{-2\alpha(t-s)}\Ex[|X_s^{i,*}|^2]ds\Bigg\}\\
  &\leq \sup_{n\geq1}\sup_{t\geq0}C\Bigg[1+\int_0^t e^{-2\alpha(t-s)}ds\Bigg]=\sup_{n\geq1}\sup_{t\geq0}C\Bigg[1+\frac{1}{2\alpha}\left(1-e^{-2\alpha t}\right)\Bigg]<\infty.
\end{align*}
This completes the proof of the lemma.
\end{proof}

The next lemma gives  $(P_t^{*,(n)},\overline{X}_t^{*,(n)})\to (\bar{m}_t^P,\bar{m}_t^X)$ as $n\to\infty$ in $L^2$-sense.
\begin{lemma}\label{lem:cov-L2P+mean}
Let assumptions {\Asa}-{\Asc} hold. Recall that $(\bar{m}^X,\bar{m}^P)\in C_b([0,\infty);\R)^2$ is the fixed point obtained in Theorem \ref{thm:fixed-point}. Then
{\small\begin{align}\label{eq:convergence000}
    \lim_{n\to\infty}\sup_{t\geq0}\Ex\left[\left|P_t^{*,(n)}-\bar{m}^P_t\right|^2 +\left|\overline{X}_t^{*,(n)}-\bar{m}^X_t\right|^2\right]=0.
\end{align}}
\end{lemma}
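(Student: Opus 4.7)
I would handle the state and the price separately: prove $L^2$-convergence of $\overline{X}^{*,(n)}$ uniformly in $t$ first, then feed it into the integral representation of $P^{*,(n)}$ to obtain the price convergence.

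\emph{Step 1: $\overline{X}^{*,(n)}_t\to\bar{m}^X_t$ in $L^2$, uniformly in $t$.} I would split
$$\mathbb{E}\bigl|\overline{X}^{*,(n)}_t-\bar{m}^X_t\bigr|^2\leq 2\,\mathrm{Var}\bigl(\overline{X}^{*,(n)}_t\bigr)+2\bigl|\mathbb{E}[\overline{X}^{*,(n)}_t]-\bar{m}^X_t\bigr|^2.$$
By \Asa the triples $(W^i,N^i,X_0^i)$ are independent across $i$, hence $(X^{i,*})_{i\geq 1}$ are mutually independent. Therefore the variance equals $\tfrac{1}{n^2}\sum_{i=1}^n\mathrm{Var}(X^{i,*}_t)$, and the uniform second-moment bound \eqref{eq:X*-L2bound} yields $\mathrm{Var}(\overline{X}^{*,(n)}_t)\leq C/n$ uniformly in $t$. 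For the bias I would plug the explicit identities
$$\mathbb{E}[X^{i,*}_t]=e^{-\mu_i t}\mathbb{E}[X_0^i]+\int_0^t e^{-\mu_i(t-s)}u^{*,i}_s\,ds,\qquad \bar{m}^X_t=e^{-\mu t}x_0+\int_0^t e^{-\mu(t-s)}\bar{u}^*_s\,ds$$
(the latter obtained by solving the second equation in \eqref{eq:pxu}), subtract them, and show that the Cesàro average $\tfrac{1}{n}\sum_i\bigl(\mathbb{E}[X^{i,*}_t]-\bar{m}^X_t\bigr)\to 0$ uniformly. The initial-data piece is controlled using the Cesàro statement in \Asa combined with the elementary estimate $\sup_{t\geq 0}|e^{-\mu_i t}-e^{-\mu t}|\leq|\mu_i-\mu|/(e\mu^*)$ for a uniform lower bound $\mu^*>0$ available by \Asb; the integral piece uses the uniform convergence $u^{*,i}_s\to\bar{u}^*_s$ on $[0,\infty)$, which follows from \eqref{eq:bar-gi-ex}, the boundedness of $\bar{m}^P$ guaranteed by Theorem \ref{thm:fixed-point}, and \Asb.

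\emph{Step 2: $P^{*,(n)}_t\to\bar{m}^P_t$ in $L^2$, uniformly in $t$.} Starting from the integral form of $P^{*,(n)}$ already written down in the proof of Lemma \ref{lem:es-L2P+mean} and the corresponding
$$\bar{m}^P_t=e^{-\alpha t}p_0+\alpha\int_0^t e^{-\alpha(t-s)}\bigl[\beta-(1-\lambda\gamma)\bar{m}^X_s\bigr]ds$$
obtained by solving the first equation in \eqref{eq:pxu}, I would write
$$P^{*,(n)}_t-\bar{m}^P_t=-\alpha\int_0^t e^{-\alpha(t-s)}\bigl(\overline{X}^{*,(n)}_s-\bar{m}^X_s\bigr)ds+J^{(n)}_t+E^{(n)}_t,$$
where $J^{(n)}_t=\tfrac{\alpha}{n}\sum_{i=1}^n\gamma_i\int_0^t e^{-\alpha(t-s)}X^{i,*}_{s-}(dN^i_s-\lambda_i\,ds)$ is a compensated-Poisson martingale and $E^{(n)}_t$ collects the residual drift once the jumps are centred and $\alpha\lambda\gamma\int_0^t e^{-\alpha(t-s)}\bar{m}^X_s ds$ is added and subtracted. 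Minkowski's inequality and Step 1 control the first term; independence of the $N^i$ together with Lemma \ref{lem:es-L2P+mean} give $\mathbb{E}|J^{(n)}_t|^2\leq C/n$ uniformly in $t$; and $E^{(n)}$ rewrites, via $\tfrac{1}{n}\sum_i\gamma_i\lambda_i X^{i,*}_s=\lambda\gamma\,\overline{X}^{*,(n)}_s+\tfrac{1}{n}\sum_i(\gamma_i\lambda_i-\lambda\gamma)X^{i,*}_s$, into another copy of the first term plus a parameter-mismatch residual that is handled by \Asb and Cesàro averaging.

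\emph{Main obstacle.} The non-trivial issue is not the pointwise-in-$t$ limit, but upgrading every bound to $\sup_{t\geq 0}$ over a non-compact time horizon while the type parameters vary with $i$. The exponential-decay kernels $e^{-\alpha(t-s)}$ and $e^{-\mu_i(t-s)}$, the sup-norm bounds on $\bar{m}^P$ and on $\bar{g}^i$ from \eqref{eq:bg-bound}, and the elementary estimate $\sup_{t\geq 0}t\,e^{-\mu^* t}<\infty$ are precisely what allow the Cesàro averages over $i$ and the integrals over $s\in[0,t]$ to be controlled uniformly in $t$. Once Step 1 is in place uniformly, Step 2 reduces to integration against bounded kernels and no Grönwall argument is needed.
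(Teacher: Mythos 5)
Your proposal follows essentially the same route as the paper's proof: you split $\overline{X}^{*,(n)}_t-\bar{m}^X_t$ into a fluctuation part killed by independence plus the uniform second-moment bound and a bias part killed by Ces\`aro averaging of the type parameters, and then push this through the exponential-kernel representation of $P^{*,(n)}$, treating the jump contribution as a centered $O(1/n)$ martingale and the leftover drift as a parameter-mismatch residual, exactly as the paper does. The differences are only cosmetic: you spell out the bias estimate that the paper dismisses with ``similarly,'' you compensate the Poisson integrals with $\lambda_i X^{i,*}_s\,ds$ and invoke the isometry rather than the paper's Doob-inequality phrasing, and your direct uniform-in-$t$ variance bound even bypasses the equicontinuity detour the paper takes.
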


\begin{proof}
Let $m^i_t:=\Ex[X^{i,*}_t]$. Then, for all $t\geq0$,
{\small\begin{align*}
  \Ex\left[\left|\overline{X}_t^{*,(n)}-\bar{m}^X_t\right|^2\right]
 & = \Ex\left[\left|\frac{1}{n}\sum_{i=1}^{n}(X_t^{i,*}-m_t^i) +\frac{1}{n}\sum_{i=1}^{n}(m_t^i-\bar{m}^X_t)\right|^2\right]\nonumber \\
  &\leq 2\Ex\left[\left|\frac{1}{n}\sum_{i=1}^{n}(X_t^{i,*}-m_t^i)\right|^2\right] +2\left|\frac{1}{n}\sum_{i=1}^{n}(m_t^i-\bar{m}^X_t)\right|^2.
\end{align*}}
Note that $m^i=(m^i_t)_{t\geq0}$ solves that
\begin{equation}\label{eq:mi}
  dm_t^i=-\mu_im_t^idt+\frac{1}{2r_i}\bar{g}_t^idt,\quad m_0^i=\Ex[X_0^i].
\end{equation}
Then, by \eqref{eq:bg-bound} and the assumption {\Asb}, we have
\begin{align*}
    \sup_{i\geq1}\|m^i\|_\infty<\infty.
\end{align*}
Thanks to \eqref{eq:X*-L2bound}, we deduce that
\begin{align*}
    \sup_{i\geq 1}\sup_{t\geq0}\Ex\left[\left|X^{i,*}_t-m^i_t\right|^2\right]<\infty.
\end{align*}
Using the independence of $(X^{i,*})_{i=1}^n$, it follows that
\begin{align*}
\lim_{n\to\infty}\Ex\left[\left|\frac{1}{n}\sum_{i=1}^{n}(X_t^{i,*}-m_t^i)\right|^2\right]=0,\quad \forall t\geq0.
\end{align*}
By Jensen's inequality and the inequality $a^2-b^2\leq 2(a\vee b)|a-b|$, we get
{\small\begin{align*}
  &\sup_{|t-s|<\delta}\left|\Ex\left[\left|\frac{1}{n}\sum_{i=1}^{n}(X_t^{i,*}-m_t^i)\right|^2\right] -\Ex\left[\left|\frac{1}{n}\sum_{i=1}^{n}(X_s^{i,*}-m_s^i)\right|^2\right]\right|\\
  &\quad\leq \frac{C}{n}\sum_{i=1}^{n}\sup_{|t-s|<\delta} \left\{\Ex[|X_t^{i,*}-X_s^{i,*}|]+|m_t^i-m_s^i|\right\}\leq C\delta,
\end{align*}}
where $C>0$ is a generic positive constant independent of $n$. Thus, we arrive at
\begin{align*}
\lim_{n\to\infty}\sup_{t\geq0}\Ex\left[\left|\frac{1}{n}\sum_{i=1}^{n}(X_t^{i,*}-m_t^i)\right|^2\right]=0.
\end{align*}
Similarly, it can be deduced that
\begin{equation}\label{eq:meanCov-p1}
  \lim_{n\to\infty}\sup_{t\geq0}\left|\frac{1}{n}\sum_{i=1}^{n}(m_t^i-\bar{m}^X_t)\right|^2=0.
\end{equation}
These estimates conclude that
\begin{equation}\label{eq:meanCov}
  \lim_{n\to\infty}\sup_{t\geq0}\Ex\left[\left|\overline{X}_t^{*,(n)}-\bar{m}^X_t\right|^2\right]=0.
\end{equation}

We next show that $P_t^{*,(n)}$ converges to $\bar{m}^P_t$ in $L^2$ uniformly in $t$ as $n\to\infty$. We deduce from \eqref{eq:pxu} and \eqref{eq:Pn*} that
\begin{align*}
  &  P_t^{*,(n)}-\bar{m}^P_t=\alpha\int_{0}^{t}e^{-\alpha(t-s)}(\bar{m}^X_s-\overline{X}_s^{*,(n)})ds+\frac{\alpha}{n}\sum_{i=1}^{n}\gamma_i\int_{0}^{t}e^{-\alpha(t-s)}X_{s-}^{i,*}dN_s^i-\alpha\lambda\gamma\int_{0}^{t} e^{-\alpha(t-s)}\bar{m}^X_sds.
\end{align*}
Therefore, it holds that
\begin{align*}
  \Ex\left[\left|P_t^{*,(n)}-\bar{m}^P_t\right|^2\right]
  &\leq C\Bigg\{ \int_{0}^{t}e^{-2\alpha(t-s)}\Ex[|\bar{m}^X_s-\overline{X}_s^{*,(n)}|^2]ds\\
  &\quad+\Ex\left[\left|\frac{1}{n}\sum_{i=1}^{n}\gamma_i\left(\int_{0}^{t}e^{-\alpha(t-s)}X_{s}^{i,*}dN_s^i -\lambda_i\int_{0}^{t}e^{-\alpha(t-s)}m_s^ids\right)\right|^2\right]\\
  &\quad+\left|\frac{1}{n}\sum_{i=1}^{n}\lambda_i\gamma_i\int_{0}^{t}e^{-\alpha(t-s)}m_s^ids -\alpha\lambda\gamma\int_{0}^{t} e^{-\alpha(t-s)}\bar{m}^X_sds\right|^2 \Bigg\}.
\end{align*}
Using \eqref{eq:meanCov}, the 1st term of RHS of the above display converges to $0$ uniformly in $t$ as $n\to\infty$. Moreover, by the independence of $(N^i)_{i\geq 1}$, it is a direct result from the Doob's maximal inequality that the 2nd term also converges to $0$ uniformly in $t$, as $n\to\infty$. Thus, using \eqref{eq:meanCov-p1} and the assumption {\Asb}, it follows that
\begin{align*}
\lim_{n\to\infty}\sup_{t\geq0}\Ex\left[\left|P_t^{*,(n)}-\bar{m}^P_t\right|^2\right]=0.
\end{align*}
Then, the desired result follows from  \eqref{eq:meanCov}.
\end{proof}

\begin{lemma}\label{lem:bound-u}
Let assumptions {\Asa}-{\Asc} hold. Define the control set as follows:
\begin{equation}\label{eq:rational-choice}
  \widetilde{\mathbb{U}}^i=\{u^i\in\mathbb{U};~R_i^{(n)}(u^i,u^{*,-i})\geq0\}.
\end{equation}
For the the decentralized strategy $u^{*,i}$ for $i=1,\ldots,n$ defined by \eqref{eq:u*i}, there exist positive constants $n_0$ and $D_3$ such that $\sup_{u^i\in\widetilde{\mathbb{U}}^i}\|u^i\|_\rho^2\leq D_3$ whenever $n\geq n_0$.
\end{lemma}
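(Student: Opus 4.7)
The plan is to use the quadratic penalty $-r_i(u_t^i)^2$ in \eqref{eq:Rni} to trap $\|u^i\|_\rho$ whenever the reward is required to be non-negative. Fix $i\in\{1,\dots,n\}$ and $u^i\in\widetilde{\mathbb{U}}^i$, and decompose the price under $(u^i,u^{*,-i})$ against the fully decentralized price as $P_t^{(u^i,u^{*,-i}),(n)}=P_t^{*,(n)}+\Delta P_t^{i,n}$. The perturbation $\Delta P^{i,n}$ satisfies an inhomogeneous linear SDE whose coefficients are $1/n$ times $(X^{i,u^i}-X^{i,*})$ multiplied by a bounded drift and by $dN^i$. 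Solving it explicitly, decomposing the Poisson integral through the compensated martingale $N^i_t-\lambda_i t$, applying the It\^o isometry, and interchanging the order of integration after inserting $e^{-\rho t}$ yields, with some constant $C>0$ independent of $i,n,u^i$,
\begin{align*}
\left\|\Delta P^{i,n}\right\|_\rho\le \frac{C}{n}\left\|X^{i,u^i}-X^{i,*}\right\|_\rho\le \frac{C}{n}\bigl(1+\|u^i\|_\rho\bigr),
\end{align*}
where the second step uses Lemma \ref{lem:estimates} and the uniform bound on $\|u^{*,i}\|_\rho$ coming from \eqref{eq:bg-bound}, \eqref{eq:u*i} and the fact that $\inf_{i\ge 1}r_i>0$ (ensured by {\Asb}).

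With the uniform estimate $\|P^{*,(n)}\|_\rho\le \sqrt{D_2/\rho}$ obtained by integrating the pointwise bound from Lemma \ref{lem:es-L2P+mean}, the Cauchy--Schwarz inequality applied to the cross term in \eqref{eq:Rni} gives
\begin{align*}
\Ex\int_0^\infty e^{-\rho t}(1-c_i)P_t^{(u^i,u^{*,-i}),(n)}X_t^{i,u^i}dt \le (1-c_i)\left\|P^{(u^i,u^{*,-i}),(n)}\right\|_\rho\left\|X^{i,u^i}\right\|_\rho.
\end{align*}
Combining this with the price-perturbation bound above and Lemma \ref{lem:estimates} applied to $\|X^{i,u^i}\|_\rho$ produces
\begin{align*}
R_i^{(n)}(u^i,u^{*,-i})\le C_1(1+\|u^i\|_\rho)+\frac{C_2}{n}(1+\|u^i\|_\rho)^2-r_i\|u^i\|_\rho^2,
\end{align*}
with $C_1,C_2>0$ independent of both $i$ and $n$ thanks to {\Asb}.

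Finally, imposing $R_i^{(n)}(u^i,u^{*,-i})\ge 0$ and picking $n_0$ so large that $C_2/n\le \tfrac{1}{4}\inf_{i\ge 1}r_i$ for every $n\ge n_0$ reduces matters to a scalar quadratic inequality in $\|u^i\|_\rho$ of the form $\tfrac{1}{2}r_i\|u^i\|_\rho^2\le C_1'+C_1'\|u^i\|_\rho$, which, after one more application of the elementary bound $\|u^i\|_\rho\le 1+\|u^i\|_\rho^2$, yields a constant $D_3>0$ such that $\|u^i\|_\rho^2\le D_3$ for every $i$ and every $u^i\in\widetilde{\mathbb{U}}^i$. The main obstacle is the first step: one must verify that the $L^2_\rho$-norm of the jump-driven correction $\Delta P^{i,n}$ picks up the full $1/n$ prefactor (rather than merely $1/\sqrt{n}$), because only then can the perturbation term $C_2\|u^i\|_\rho^2/n$ be absorbed into the quadratic penalty $-r_i\|u^i\|_\rho^2$ when $n$ is large. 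Once this price-perturbation estimate is in place, the remainder is purely a scalar book-keeping exercise that benefits from {\Asb} keeping all type parameters uniformly bounded away from degeneracy.
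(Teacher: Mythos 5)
Your argument is correct, and it reaches the bound by a somewhat different route than the paper. The paper compares $R_i^{(n)}(u^i,u^{*,-i})$ with the \emph{limiting} reward $\bar{R}_i(u^i)$ built on $\bar{m}^P$: it splits $\widehat{P}^{u^i,(n)}-\bar{m}^P$ through $P^{*,(n)}$, controls $P^{*,(n)}-\bar{m}^P$ by the convergence Lemma \ref{lem:cov-L2P+mean}, controls the single-deviation term by the same $C/n^2$ estimate you derive, and then extracts the negative quadratic $-\eta\|u^i\|_\rho^2$ from $\bar{R}_i$ using $\|\bar{m}^P\|_\infty$ and Lemma \ref{lem:estimates}. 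You instead bypass $\bar{R}_i$ and Lemma \ref{lem:cov-L2P+mean} entirely, working directly with $P^{*,(n)}$: the uniform moment bound of Lemma \ref{lem:es-L2P+mean} gives $\|P^{*,(n)}\|_\rho\le\sqrt{D_2/\rho}$, and the deviation of the price caused by firm $i$ alone carries the full $\alpha/n$ prefactor (your "main obstacle" is indeed the right thing to check, and it holds: only the $i$-th coordinate differs between the two price equations, the other $n-1$ terms cancel exactly, so there is no $1/\sqrt{n}$ central-limit effect; the compensated-Poisson/It\^o-isometry plus Fubini computation is exactly the paper's own estimate). Both proofs then hinge on the same mechanism: the $u^i$-dependent part of the price enters with weight $1/n$, so for $n$ large it is absorbed by the penalty $-r_i\|u^i\|_\rho^2$ (uniformly in $i$ since $\inf_i r_i>0$ by {\Asb}), and $R_i^{(n)}\ge 0$ forces a uniform bound on $\|u^i\|_\rho$. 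Your version is slightly more self-contained (it needs only the uniform bound, not the $L^2$ convergence), and your explicit retention of the linear term $C_1(1+\|u^i\|_\rho)$ is cleaner bookkeeping than the paper's \eqref{eq:lem-B-p1}, which silently drops a cross term of that type; the paper's route, on the other hand, sets up $\bar{R}_i$ and the $\widehat{P}-\bar{m}^P$ estimates that are reused verbatim in the proof of Theorem \ref{thm:main}.

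One small point in your closing step: from $\tfrac{1}{2}r_i\|u^i\|_\rho^2\le C_1'+C_1'\|u^i\|_\rho$ the bound $\|u^i\|_\rho\le 1+\|u^i\|_\rho^2$ only closes the argument if $C_1'<\tfrac{1}{2}r_i$, which you have not arranged. Use Young's inequality, $C_1'\|u^i\|_\rho\le\tfrac{1}{4}r_i\|u^i\|_\rho^2+(C_1')^2/r_i$, or simply bound $\|u^i\|_\rho$ by the larger root of the quadratic; either way the uniform $D_3$ follows, so this is a cosmetic fix rather than a gap.
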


\begin{proof}
Let $\widehat{P}^{u^i,(n)}=(\widehat{P}_t^{u^i,(n)})_{t\geq0}$ be the  price process  with the policy $(u^i,u^{*,-i})$. The limiting reward functional for firm $i$ is defined by
{\small\begin{equation}\label{eq:barRi}
  \bar{R}_i(u^i):=\Ex\left[\int_{0}^{\infty}e^{-\rho t}\left((1-c_i)\bar{m}^P_t X_t^{i,u^i}-r_i(u_t^i)^2\right)dt\right].
\end{equation}}
By H\"{o}lder's inequality, we have that
{\small\begin{align}\label{eq:lem-B-total}
  R_i^{(n)}(u^i,u^{*,-i})
  &=(1-c_i)\Ex\left[\int_0^\infty e^{-\rho t}(\widehat{P}^{u^i,(n)}_t-\bar{m}^P_t)X_t^{i,u^i}dt\right]+\bar{R}_i(u^i)\nonumber\\
  &\leq\left\{\Ex\left[\int_0^\infty e^{-\rho t}|\widehat{P}^{u^i,(n)}_t-\bar{m}^P_t|^2dt\right]\|X^{i,u^i}\|_\rho^2\right\}^{1/2}+\bar{R}_i(u^i)\nonumber\\
 &=: I+\bar{R}_i(u^i).
\end{align}}
Note that
\begin{align*}
\Ex\left[\int_0^\infty e^{-\rho t}|\widehat{P}^{u^i,(n)}_t-\bar{m}^P_t|^2dt\right]
\leq 2\Ex\left[\int_0^\infty e^{-\rho t}|\widehat{P}^{u^i,(n)}_t-P_t^{*,(n)}|^2dt\right]+2\Ex\left[\int_0^\infty e^{-\rho t}|P_t^{*,(n)}-\bar{m}^P_t|^2dt\right],
\end{align*}
where $P^{*,(n)}=(P_t^{*,(n)})_{t\geq0}$ is the price process with the policy $u^{*,(n)}:=(u^{*,1},\ldots,u^{*,n})$ which is defined by \eqref{eq:Pn*}. Thanks to Lemma \ref{lem:cov-L2P+mean}, the 2nd term on RHS of the above display converges to $0$ as $n\to\infty$. On the other hand, using Theorem V.56 of \cite{Protter05}, it follows that
\begin{align*}
\widehat{P}^{u^i,(n)}_t-P_t^{*,(n)}&=\frac{\alpha}{n}\bigg[\int_0^t e^{-\alpha(t-s)}(X_s^{i,*}-X_s^{i,u^i})ds+\gamma_i\int_0^t e^{-\alpha(t-s)}(X_s^{i,*}-X_s^{i,u^i})dN_s^i\bigg].
\end{align*}
By Lemma \ref{lem:estimates} with {\Asb}, we have $\sup_{i\geq1}\|X^{i,*}\|_\rho<\infty$. Then, by Fubini's theorem, it holds that
\begin{align*}
  \Ex\left[\int_0^\infty e^{-\rho t}\left|\widehat{P}^{u^i,(n)}_t-P_t^{*,(n)}\right|^2dt\right]
  &\leq \frac{C}{n^2}\int_0^\infty e^{-\rho s}\Ex[|X_s^{i,*}-X_s^{i,u^i}|^2]\left(\int_s^\infty e^{-2\alpha(t-s)}dt\right)ds\\
  &\leq \frac{C}{n^2}(\|X^{i,*}\|_\rho^2+\|X^{i,u^i}\|_\rho^2)\leq\frac{C}{n^2}(1+\|X^{i,u^i}\|_\rho^2),
\end{align*}
where $C>0$ is a positive constant independent of $i,n$ and the choice of $u^i$. Thus, Lemma \ref{lem:estimates} yields that, there exists a constant $\widetilde{C}_1>0$ independent of the choice of $u^i$ s.t.
\begin{equation}\label{eq:lem-B-p1}
  I\leq \widetilde{C}_1+\frac{\widetilde{C}_1}{n}\|u^i\|_\rho^2.
\end{equation}
Since $\|\bar{m}^P\|_{\infty}<\infty$, we have, for any $\delta>0$,
\begin{align*}
  \bar{R}_i(u^i)
  &\leq \|\bar{m}^P\|_{\infty}\Ex\left[\int_0^\infty e^{-\rho t}X_t^{i,u^i}dt\right]-r_i\|u^i\|_\rho^2\leq \|\bar{m}^P\|_{\infty}\Ex\left[\int_0^\infty e^{-\rho t}\left(\delta(X_t^{i,u^i})^2+\frac{1}{4\delta}\right)dt\right]-r_i\|u^i\|_\rho^2\\
  &\leq \|\bar{m}^P\|_{\infty}\left(\frac{1}{4\delta\rho}+\delta D_1\right) -\left(r_i-\delta\|\bar{m}^P\|_{\infty}D_1\right)\|u^i\|_\rho^2,
\end{align*}
where $D_1$ is the constant given in Lemma \ref{lem:estimates}. Then, there exists $\delta>0$ small enough s.t. $r_i-\delta\|\bar{m}^P\|_{\infty}D_1=:\eta>0$. This implies that
\begin{equation}\label{eq:lem-B-p2}
  \bar{R}_i(u^i)\leq \widetilde{C}_2-\eta\|u^i\|_\rho^2,
\end{equation}
where $\widetilde{C}_2$ is a constant depending on $\eta$, but it is independent of the choice of $u^i$. Plugging \eqref{eq:lem-B-p1} and \eqref{eq:lem-B-p2} into \eqref{eq:lem-B-total}, we have
\begin{align*}
0\leq R_i^n(u^i,u^{*,-i})\leq\widetilde{C}_1+\widetilde{C}_2 -\left(\eta-\frac{\widetilde{C}_1}{n}\right)\|u^i\|_\rho^2,
\end{align*}
for all $u^i\in\widetilde{\mathbb{U}}^i$. Let $n_0=\inf\{n\geq 1|n>\widetilde{C}_1/\eta\}$. Then, for any $n\geq n_0$,
\begin{align*}
\|u^i\|_\rho^2\leq \frac{\widetilde{C}_1+\widetilde{C}_2}{\eta-(\widetilde{C}_1/n_0)}=:D_3,
\end{align*}
where $D_3$ is independent of $i$ and the choice of $u^i$.
\end{proof}


\begin{proof}[Proof of Theorem \ref{thm:main}]
By the definition \eqref{eq:rational-choice}, it suffices to prove that
\begin{equation}\label{eq:ine0000}
  \sup_{u^i\in\widetilde{\mathbb{U}}^i}R_i^{(n)}(u^i,u^{*,-i})\leq R_i^{(n)}(u^{*,(n)})+\varepsilon_n,\quad\forall i=1,\ldots,n,
\end{equation}
where $\varepsilon_n>0$ is independent of the choice of $u^i$ and it tends to 0 as $n\to\infty$. Similar to the proof of Lemma \ref{lem:bound-u}, denote by $\widehat{P}^{u^i,(n)}=(\widehat{P}_t^{u^i,(n)})_{t\geq0}$ the price process with the policy $(u^i,u^{*,-i})=(u^{*,1},\ldots,u^{*,i-1},u^i,u^{*,i+1},\ldots,u^{*,n})$. That is, it satisfies the following SDE: for $\widehat{P}_0^{u^i,(n)}=p_0$, and
\begin{align}\label{eq:hatPn}
  d\widehat{P}_t^{u^i,(n)}&=\alpha\left[\beta-\frac{1}{n}\left(X_t^{i,u^i}+\sum_{j\neq i} X_t^{j,*}\right)-P_t^{*,(n)}\right]dt+\frac{\alpha }{n}\left(\gamma_iX_{t}^{i,*}dN_t^{i}+\sum_{j\neq i}\gamma_jX_{t}^{j,*}dN_t^{j}\right),
\end{align}
where $X^{i,*}=(X^{i,*}_t)_{t\geq0}$ is given by \eqref{eq:X*i}. The limiting  reward functional for firm $i$ (i.e., with the limiting price process $\bar{m}^P_t$ instead of $\widehat{P}^{u^i,(n)}$ in the reward function $R_i^{(n)}$) is defined by: for $u^i\in\mathbb{U}$,
{\small\begin{equation}\label{eq:limitbarRiu}
  \bar{R}_i(u^i)=\Ex\left[\int_{0}^{\infty}e^{-\rho t}\left((1-c_i)\bar{m}^P_t X_t^{i,u^i}-r_i(u_t^i)^2\right)dt\right].
\end{equation}}
Then, it follows from \eqref{eq:limitbarRiu} that
\begin{align}\label{eq:aim}
&~~~~\left|\sup_{u^i\in\widetilde{\mathbb{U}}^i}R_i^{(n)}(u^i,u^{*,-i})-R_i^{(n)}(u^{*,(n)})\right|\nonumber\\
&\leq \left|\sup_{u^i\in\widetilde{\mathbb{U}}^i}\left[R_i^{(n)}(u^i,u^{*,-i}) -\bar{R}_i(u^i)\right]\right|
+\left|\sup_{u^i\in\widetilde{\mathbb{U}}^i}\bar{R}_i(u^i)-R_i^{(n)}(u^{*,(n)})\right|
=: I_1^{(n)}+I_2^{(n)}.
\end{align}
First of all, it follows from H\"{o}lder's inequality that
\begin{align*}
  &\qquad R_i^{(n)}(u^i,u^{*,-i}) -\bar{R}_i(u^i)
  =(1-c_i)\Ex\left[\int_0^\infty e^{-\rho t}(\widehat{P}^{u^i,(n)}_t-\bar{m}^P_t)X_t^{i,u^i}dt\right]\nonumber\\
  &\leq \left\{\Ex\left[\int_0^\infty e^{-\rho t}|\widehat{P}^{u^i,(n)}_t-\bar{m}^P_t|^2dt\right]\|X^{i,u^i}\|_\rho^2\right\}^{1/2}
  \leq \left\{D_1(1+D_3)\Ex\left[\int_0^\infty e^{-\rho t}|\widehat{P}^{u^i,(n)}_t-\bar{m}^P_t|^2dt\right]\right\}^{1/2},
\end{align*}
where the last inequality is due to Lemma \ref{lem:estimates} and Lemma \ref{lem:bound-u}. We also note that
\begin{align*}
  \Ex\left[\int_0^\infty e^{-\rho t}|\widehat{P}^{u^i,(n)}_t-\bar{m}^P_t|^2dt\right]
  \leq 2\Ex\left[\int_0^\infty e^{-\rho t}|\widehat{P}^{u^i,(n)}_t-P_t^{*,(n)}|^2dt\right]+2\Ex\left[\int_0^\infty e^{-\rho t}|P_t^{*,(n)}-\bar{m}^P_t|^2dt\right].
\end{align*}
Using \eqref{eq:hatPn} and \eqref{eq:Pn*}, it follows from a similar argument used in the proof of Lemma \ref{lem:es-L2P+mean} that
\begin{align*}
  \sup_{u^i\in\widetilde{\mathbb{U}}^i}\Ex\left[\left|\widehat{P}^{u^i,(n)}_t-P_t^{*,(n)}\right|^2\right]\leq \sup_{u^i\in\widetilde{\mathbb{U}}^i}\frac{C}{n^2}\Ex\left[\left|X_t^{i,u^i}-X_t^{i,*}\right|^2\right]\leq \frac{C}{n^2},
\end{align*}
where $C>0$ is a constant independent of $i$. Then, Lemma \ref{lem:cov-L2P+mean} yields that
\begin{align*}
\sup_{u^i\in\widetilde{\mathbb{U}}^i}\Ex\left[\int_0^\infty e^{-\rho t}|\widehat{P}^{u^i,(n)}_t-\bar{m}^P_t|^2dt\right]\to 0,\quad n\to\infty.
\end{align*}
Thus, $I_1^{(n)}\to 0$ as $n\to\infty$. Similarly to the proof of Proposition \ref{prop:repre-optimal}, we can show that $\sup_{u^i\in\widetilde{\mathbb{U}}^i}\bar{R}_i(u^i)=\bar{R}_i(u^{*,i})$.
Therefore
{\small\begin{align*}
  (I_2^{(n)})^2&=\left|\bar{R}_i(u^{*,i})-R_i^{(n)}(u^{*,(n)})\right|^2
  \leq \Ex\left[\int_0^\infty e^{-\rho t}|\bar{m}^P_t-P_t^{*,(n)}|^2dt\right]\Ex\left[\int_0^\infty e^{-\rho t}|X_t^{i,*}|^2dt\right]\\
  &\leq C\Ex\left[\int_0^\infty e^{-\rho t}|\bar{m}^P_t-P_t^{*,{(n)}}|^2dt\right]\to 0,\qquad \textrm{as}~n\to\infty.
\end{align*}}
Then, the desired result follows from \eqref{eq:aim} with $\varepsilon_n\leq I_1^{(n)}+I_2^{(n)}\to0$ as $n\to\infty$. 
\end{proof}

\end{document}